%%\documentclass[referee,sn-basic]{sn-jnl}% referee option is meant for double line spacing

%%=======================================================%%
%% to print line numbers in the margin use lineno option %%
%%=======================================================%%

%%\documentclass[lineno,sn-basic]{sn-jnl}% Basic Springer Nature Reference Style/Chemistry Reference Style

%%======================================================%%
%% to compile with pdflatex/xelatex use pdflatex option %%
%%======================================================%%

%%\documentclass[pdflatex,sn-basic]{sn-jnl}% Basic Springer Nature Reference Style/Chemistry Reference Style

%%\documentclass[sn-basic]{sn-jnl}% Basic Springer Nature Reference Style/Chemistry Reference Style
\documentclass[sn-mathphys]{sn-jnl}% Math and Physical Sciences Reference Style
%%\documentclass[sn-aps]{sn-jnl}% American Physical Society (APS) Reference Style
%%\documentclass[sn-vancouver]{sn-jnl}% Vancouver Reference Style
%%\documentclass[sn-apa]{sn-jnl}% APA Reference Style
%%\documentclass[sn-chicago]{sn-jnl}% Chicago-based Humanities Reference Style
%%\documentclass[sn-standardnature]{sn-jnl}% Standard Nature Portfolio Reference Style
%%\documentclass[default]{sn-jnl}% Default
%%\documentclass[default,iicol]{sn-jnl}% Default with double column layout

%%%% Standard Packages
%%<additional latex packages if required can be included here>

%\usepackage{showkeys}

\usepackage{natbib}

\newcommand{\bbHr}{\mathbb{H}^2(r)}
\newcommand{\bfe}{\mathbf{e}}
\newcommand{\rmd}{\mathrm{d}}
\newcommand{\rme}{\mathrm{e}}

%%%%%=============================================================================%%%%
%%%% Remarks: This template is provided to aid authors with the preparation
%%%% of original research articles intended for submission to journals published 
%%%% by Springer Nature. The guidance has been prepared in partnership with 
%%%% production teams to conform to Springer Nature technical requirements. 
%%%% Editorial and presentation requirements differ among journal portfolios and 
%%%% research disciplines. You may find sections in this template are irrelevant 
%%%% to your work and are empowered to omit any such section if allowed by the 
%%%% journal you intend to submit to. The submission guidelines and policies 
%%%% of the journal take precedence. A detailed User Manual is available in the 
%%%% template package for technical guidance.
%%%%%=============================================================================%%%%

\jyear{2021}%

%% as per the requirement new theorem styles can be included as shown below
\theoremstyle{thmstyleone}%
\newtheorem{maintheorem}{Main Theorem}
\newtheorem{theorem}{Theorem}% meant for continuous numbers
%%\newtheorem{theorem}{Theorem}[section]% meant for sectionwise numbers
%% optional argument [theorem] produces theorem numbering sequence instead of independent numbers for Proposition
\newtheorem{proposition}[theorem]{Proposition}% 
\newtheorem{lemma}[theorem]{Lemma}

\theoremstyle{thmstyletwo}%
\newtheorem{remark}{Remark}%

\theoremstyle{thmstylethree}%
\newtheorem{definition}{Definition}%

\raggedbottom
%%\unnumbered% uncomment this for unnumbered level heads

\begin{document}

\title{Catenaries and minimal surfaces of revolution in hyperbolic space}

\author[1]{\fnm{Luiz C. B. da Silva}}\email{luiz.da-silva@weizmann.ac.il}

\author[2]{\fnm{Rafael L\'opez}}\email{rcamino@ugr.es}
\equalcont{These authors contributed equally to this work.}

\affil[1]{\orgdiv{Department of Physics of Complex Systems, Weizmann Institute of Science. 
 Rehovot 7610001, Israel}}

\affil[2]{\orgdiv{Departamento de Geometr\'ia y Topolog\'ia, Universidad de Granada. 18071 Granada, Spain}}

\abstract{We introduce the concept of extrinsic catenary in the hyperbolic plane. Working in the hyperboloid model, we define an extrinsic catenary as the shape of a curve hanging under its weight as seen from the ambient space. In other words, an extrinsic catenary is a critical point of the potential functional, where we calculate the potential with the extrinsic distance to a fixed reference plane in the ambient Lorentzian space. We then characterize extrinsic catenaries in terms of their curvature and as a solution to a prescribed curvature problem involving certain vector fields. In addition, we prove that the generating curve of any minimal surface of revolution in the hyperbolic space is an extrinsic catenary with respect to an appropriate reference plane. Finally, we prove that one of the families of extrinsic catenaries admits an intrinsic characterization if we replace the extrinsic distance with the intrinsic length of horocycles orthogonal to a reference geodesic.}
 
\keywords{Catenary, extrinsic catenary, hyperbolic space, minimal surface, surface of revolution}

%%\pacs[JEL Classification]{D8, H51}

\pacs[MSC Classification]{53A04, 53A10, 53A35, 49J05}

\maketitle

%%%%%%%%%%%%%%%%%%%%%%%%%%%%
\section{Introduction}\label{sec1}
%%%%%%%%%%%%%%%%%%%%%%%%%%%%

The catenary is the solution to the problem of minimizing the potential gravitational energy of a chain hanging under its weight when supported only at its ends. Euler proved that by rotating a catenary about the reference line with respect to which the weight is measured, the resulting surface of revolution has zero mean curvature \cite{eulercatenoid,struik}. This surface is called a catenoid, the only non-planar minimal surface of revolution in Euclidean space. Recently, the second author extended the notion to the catenary in the sphere and the hyperbolic plane \cite{lopezcatenaryform}. After we fix a reference geodesic $\ell$, the energy functional to be minimized is the potential energy, which is given by integrating the distance of points of the chain to $\ell$. A critical point of this functional is called an (intrinsic) catenary.
 
%Embedding the hyperbolic plane $\mathbb{H}^2(r)$ in the hyperbolic space $\mathbb{H}^3(r)$, we may ask, as Euler did in Euclidean space, whether the revolution a catenary about $\ell$ is a minimal surface. It turns out that the resulting surface of revolution is not minimal. This paper aims to overcome this problem by providing a different approach to the notion of catenary. 

Embedding the hyperbolic plane (2-sphere) in the hyperbolic space (3-sphere, respectively), we may ask, as Euler did in Euclidean space, whether the revolution of a catenary about $\ell$ is a minimal surface. It turns out that the resulting surface of revolution is not minimal. This problem was circumvented in the spherical case by introducing the notion of an extrinsic catenary. More precisely, instead of measuring the potential using the intrinsic distance, one uses the distance in the ambient Euclidean space to a plane passing by the center of the sphere \cite{lopezcatenaryform}. This paper aims to provide a similar extension of Euler's result to the hyperbolic space.
 
Contrarily to the three-dimensional Euclidean space and sphere, in the hyperbolic space, $\mathbb{H}^3(r)$, three types of surfaces of revolution exist. Indeed, considering $\mathbb{H}^3(r)$ as a hypersurface of the $4$-dimensional Lorentz-Minkowski space  $\mathbb{E}_1^4$ (the hyperboloid model), rotations in $\mathbb{H}^3(r)$ correspond to orthogonal transformations in $\mathbb{E}_1^4$ that leave a 2-dimensional subspace $P^2$ point-wise fixed. (We refer to $P^2$ as the axis of revolution.) Thus, depending on the causal character of the axis of revolution $P^2\subset\mathbb{E}_1^4$, the corresponding surface of revolution is of elliptic, hyperbolic, or parabolic type\footnote{A surface of revolution with axis $P^2$ is of elliptic type if $P^2$ is spacelike, hyperbolic type if $P^2$ is timelike, and parabolic type if $P^2$ is lightlike. The terminology stands for the fact that the orbits of the revolution are ellipses, hyperbolas, and parabolas, respectively.}. Motivated by this classification, instead of measuring the gravitational potential in the hyperbolic plane $\mathbb{H}^2(r)$ from the intrinsic distance to a fixed geodesic of $\mathbb{H}^2(r)$, as in Ref. \cite{lopezcatenaryform}, the potential will be measured using the extrinsic distance. More precisely, the {\it extrinsic catenary problem} consists in finding the shape of a curve $\gamma\colon [a,b]\to \mathbb{H}^2(r)\subset\mathbb{H}^3(r)$, which is a critical point of the potential energy 
$$
 \gamma\longmapsto \int_a^b \mbox{dist}_{\mathbb{E}_1^4}(\gamma(t),P^2)\Vert\dot{\gamma}(t)\Vert\,\rmd t,
$$
where $\mbox{dist}_{\mathbb{E}_1^4}(\gamma(t),P^2)$ is the distance in $\mathbb{E}_1^4$ between $P^2$ and the point $\gamma(t)$. 

Depending on the causal character of $P^2$, critical points of the potential energy functional will be called extrinsic catenaries of elliptic, hyperbolic, and parabolic types. Our main theorem will extend Euler's result about the catenoid to the hyperbolic space. Namely, we have (we prove it as Theorem \ref{tmain} in Section \ref{sec4})

% \begin{theorem}\label{tmain}
% The generating curves of minimal surfaces of revolution in $\mathbb{H}^3(r)$ of elliptic, hyperbolic, and parabolic types are extrinsic catenaries of spherical, hyperbolic, and parabolic types, respectively.
% \end{theorem}

\begin{maintheorem}
The generating curves of minimal surfaces of revolution in $\mathbb{H}^3(r)$ of elliptic, hyperbolic, and parabolic types are extrinsic catenaries of spherical, hyperbolic, and parabolic types, respectively.
\end{maintheorem}

In addition, we show that one of the families of extrinsic catenaries admits an intrinsic characterization in terms of the so-called horo-catenary. In other words, a horo-catenary is a critical point of the potential energy functional obtained by replacing $\mbox{dist}_{\mathbb{E}_1^4}(\gamma(t),P^2$ with the length of a horocycle orthogonal to a fixed reference geodesic. Thus, we obtain the following theorem (we prove it as Theorem \ref{thr::EllipticCatenaryAreHorocatenary} in Section \ref{sect::Horocatenary})

\begin{maintheorem}
    Every hyperbolic horo-catenary is an extrinsic catenary in $\mathbb{H}^2(r)$ of the elliptic type. Consequently, the generating curves of minimal surfaces of revolution in $\mathbb{H}^3(r)$ of the elliptic type are horo-catenaries.
\end{maintheorem}

Surfaces of revolution in $\mathbb{H}^3(r)$ are well-known objects of investigation. First, Mori found minimal surfaces of revolution of elliptic type \cite{morirotation}.\footnote{Surfaces of revolution of elliptic type are often called surfaces of revolution of spherical type. However, unless $P^2$ is the $yz$-plane (the $x$-axis is timelike), the orbits are not (Euclidean) circles.} Later, do Carmo and Dajczer obtained all the minimal surfaces of revolution and, among other results,  established their relation with helicoids \cite{carmodajczerrotation}. To our knowledge, this work is the first to characterize minimal surfaces of revolution in hyperbolic space through a variational formulation for their generating curves. 
 
The rest of this paper is divided as follows. In Section \ref{sec2}, we formulate the extrinsic catenary problem in $\mathbb{H}^2(r)$. Theorem \ref{t4} characterizes extrinsic catenaries in terms of their curvature function. In Section \ref{sec22}, we characterize extrinsic catenaries by prescribing the curvature in terms of an equation involving Killing vector fields of $\mathbb{E}_1^3$ (Theorem \ref{tkilling}). These vector fields indicate the direction of the geodesics in $\mathbb{E}_1^3$ used to measure the weight. To be self-contained, in Section \ref{sec3}, we provide a detailed construction of all surfaces of revolution in hyperbolic space along with the computation of their mean curvature.
In Section \ref{sec4}, we prove our main result concerning the characterization of the generating curves of surfaces of revolution in the hyperbolic space (Theorem \ref{tmain}). In Section \ref{sect::Horocatenary}, we prove that horocatenaries are extrinsic catenaries of the elliptic type, thus providing an intrinsic characterization for minimal surfaces of revolution of the elliptic type (Theorem \ref{thr::EllipticCatenaryAreHorocatenary}). Finally, we present our concluding
remarks and formulate some open problems in the last section.

%%%%%%%%%%%%%%%%%%%%%%
\section{The extrinsic catenary problem}
\label{sec2}
%%%%%%%%%%%%%%%%%%%%%%

Let $\mathbb{E}_1^{n+1}=(\mathbb{R}^{n+1},\langle X,Y\rangle_1=-X_0Y_0+\sum_{i=1}^nX_iY_i)$ be the $(n+1)$-dimensional Lorentz-Minkowski space. In the hyperboloid model of the hyperbolic space, we see $\mathbb{H}^n(r)$ as a hypersurface in $\mathbb{E}_1^{n+1}$ of curvature $K=-1/r$: 
% \begin{align*}
%  \mathbb{H}^2(r)& = \{(x,y,z)\in\mathbb{E}_1^{3}:-x^2+y^2+z^2=-r^2,\,x>0\}\\
%  \mathbb{H}^3(r)& = \{(x,y,z,w)\in\mathbb{E}_1^{4}:-x^2+y^2+z^2+w^2=-r^2,\,x>0\}.
% \end{align*}
\begin{equation}
    \mathbb{H}^n(r) = \{X=(X_0,\dots,X_n)\in\mathbb{E}_1^{n+1}:\langle X,X\rangle_1=-r^2,\,X_0>0\}.
\end{equation}
We shall denote by $\langle\cdot,\cdot\rangle$ the induced inner product of $\mathbb{H}^n(r)$. In addition, we embed $\mathbb{H}^2(r)$ into $\mathbb{H}^3(r)$ by the natural inclusion 
\begin{equation}\label{inclusion}
(x,y,z)\in \mathbb{H}^2(r)\hookrightarrow (x,y,z,0)\in \mathbb{H}^3(r).
\end{equation}
The extrinsic catenary problem is formulated as follows:

\begin{quote}{\it The extrinsic catenary problem}. Given a subspace $P^2$ of $\mathbb{E}_1^{4}$, find the shape of a chain $\gamma:[a,b]\to \mathbb{H}^2(r)\subset\mathbb{H}^3(r)$ that optimizes the potential energy
\begin{equation}\label{exc1}
\mathcal{W}[\gamma]=\int_a^b \mbox{dist}(\gamma(t),P^2)\Vert\dot{\gamma}(t)\Vert\,\rmd t,
\end{equation}
where $\mbox{dist}(\gamma(t),P^2)$ is the distance in $\mathbb{E}_1^{4}$ of the point $\gamma(t)$ to $P^2$. 
\end{quote} 

\begin{definition} Critical points of $\mathcal{W}[\gamma]$  are called \emph{extrinsic catenaries}. \end{definition}

In the definition of extrinsic catenaries, it is implicitly assumed that $\gamma$ lies in one side of the plane $P^2$, which then implies that the integrand in \eqref{exc1} is positive. In contrast to how catenaries are defined in Ref. \cite{lopezcatenaryform}, the weight is now calculated using the extrinsic distance in $\mathbb{E}_1^4$, rather than the intrinsic distance  to a given geodesic in $\mathbb{H}^2(r)$.
 
In $\mathbb{H}^3(r)$, there are three types of planes $P^2$ according to their causal character. In general, if $P^k$ denotes a $k$-dimensional subspace of $\mathbb{E}_1^{4}$, $P^k$ is said to be \emph{Lorentzian}, \emph{Riemannian}, or \emph{degenerate}, if the restriction of the metric $\langle\cdot,\cdot\rangle_1$ to $P^k$ is a Lorentzian, Riemannian, or degenerate metric, respectively. Therefore, we define three types of extrinsic catenaries: $\gamma$ is an extrinsic catenary of (i) \emph{elliptic type}, (ii) \emph{hyperbolic type}, and (iii) \emph{parabolic type} if $\gamma$ is a critical point of \eqref{exc1} when $P^2$ is   (i) Lorentzian, (ii) Riemannian, and (iii) degenerate, respectively.
 
To characterize the extrinsic catenaries, we will obtain the Euler-Lagrange equations of the functional \eqref{exc1}. This task can be significantly simplified by choosing a suitable system of coordinates and then applying the standard techniques of the Calculus of Variations. 

Without loss of generality, let $\ell$ be the geodesic in $\bbHr$ obtained from the intersection with the plane of equation $z=0$:
\begin{equation}
 \ell(v) = r(\cosh\frac{v}{r},\sinh\frac{v}{r},0), \quad v\in\mathbb{R}.
\end{equation}
If $\beta_v(u)$ denotes the geodesic with  unit velocity $X(v)=(0,0,1)\in T_{\ell(v)}\mathbb{H}^2(r)$, $\langle X(v),\ell'(v)\rangle_1=0$, then we can parametrize $\mathbb{H}^2(r)$ by 
\begin{eqnarray}\label{eq35}
\psi(u,v) & = & \beta_v(u) = \ell(v)\cosh \frac{u}{r}+\sinh \frac{u}{r}\,X\nonumber\\
 & = & r(\cosh\frac{u}{r}\cosh\frac{v}{r},\cosh\frac{u}{r}\sinh\frac{v}{r},\sinh\frac{u}{r}).
\end{eqnarray} 
This coordinate system is known as the semi-geodesic coordinates \cite{struik}. Since 
\begin{equation}\label{puv}
\begin{split}
 \psi_u &= (\sinh\frac{u}{r}\cosh\frac{v}{r},\sinh\frac{u}{r}\sinh\frac{v}{r},\cosh\frac{u}{r})\\
 \psi_v& = (\cosh\frac{u}{r}\sinh\frac{v}{r},\cosh\frac{u}{r}\cosh\frac{v}{r},0),
\end{split}
\end{equation}
the induced metric on $\mathbb{H}^2(r)$ takes the form
\begin{equation}\label{metric}
 \rmd s^2 = \rmd u^2+\cosh^2\frac{u}{r}\,\rmd v^2.
\end{equation}

Now, we compute the distance in $\mathbb{E}_1^4$ from $\gamma(t)$ to $P^2$. Let $\{\mathbf{e}_x,\mathbf{e}_y,\mathbf{e}_z,\mathbf{e}_w\}$ be the unit velocity vectors of the canonical Cartesian coordinates $(x,y,z,w)$ of $\mathbb{E}_1^4$ and denote $[\mathbf{v}_1,\dots,\mathbf{v}_k]=\mbox{span}\{\mathbf{v}_1,\dots,\mathbf{v}_k\}$. Without loss of generality, we can suppose that the subspace $P^2$ is one of the following: 
\begin{enumerate}
\item $P^2=[\bfe_{x},\bfe_{y}]$ (Lorentzian).
\item $P^2=[\mathbf{e}_y,\mathbf{e}_z]$ (Riemannian).
 \item $P^2= [\mathbf{e}_{y}+\mathbf{e}_{x},\mathbf{e}_z]$ (degenerate).
 \end{enumerate}
From \eqref{eq35}, we have 
\[
 \mbox{dist}(\gamma(u,v),[\bfe_{x},\bfe_{y}]) = r\sinh\frac{u}{r},
\]
\[
 \mbox{dist}(\gamma(u,v),[\mathbf{e}_y,\mathbf{e}_z]) = r\cosh\frac{u}{r}\cosh\frac{v}{r},
\]
and
\[
 \mbox{dist}(\gamma(u,v),[\mathbf{e}_{y}+\mathbf{e}_{x},\mathbf{e}_z]) = \frac{1}{\sqrt{2}}\rme^{-\frac{v}{r}}\cosh\frac{u}{r}.
\]

Up to multiplication by a constant, the distance to $P^2$ can be taken as $\sinh\frac{u}{r}$ (elliptic), $\cosh\frac{u}{r}\cosh\frac{v}{r}$ (hyperbolic), and $e^{-v/r}\cosh\frac{u}{r}$ (parabolic). Therefore,
\begin{enumerate}
\item Extrinsic catenaries of elliptic type are critical points of
\begin{equation}\label{fu1}
 \mathcal{W}_{\mathcal{E}}[u(t),v(t)] = \int_a^b\sinh\frac{u}{r}\sqrt{\dot{u}^2+\dot{v}^2\cosh^2\frac{u}{r}}\,\rmd t,\quad u>0.
\end{equation}
\item Extrinsic catenaries of hyperbolic type are critical points of 
\begin{equation}\label{fu2}
 \mathcal{W}_{\mathcal{H}} [u(t),v(t)]= \int_a^b\cosh\frac{u}{r}\cosh\frac{v}{r}\sqrt{\dot{u}^2+\dot{v}^2\cosh^2\frac{u}{r}}\,\rmd t,\quad u>0.
\end{equation}
\item Extrinsic catenaries of parabolic type are critical points of 
\begin{equation}\label{fu3}
 \mathcal{W}_{\mathcal{P}}[u(t),v(t)] = \int_a^b\rme^{-\frac{v}{r}}\cosh\frac{u}{r}\sqrt{\dot{u}^2+\dot{v}^2\cosh^2\frac{u}{r}}\,\rmd t,\quad u>0.
\end{equation}
The assumption $u>0$ means that $\gamma$ lies on one side of the plane $P^2$.
\end{enumerate}

The functionals $\mathcal{W}_{\mathcal{E}}$, $\mathcal{W}_{\mathcal{H}}$, and $\mathcal{W}_{\mathcal{P}}$ are all of the form $\int f(u,v) \Vert\dot{\gamma}\Vert\rmd t$. To compute the corresponding Euler-Lagrange equations, we first need the expression of the geodesic curvature $\kappa$ in $\bbHr$ with respect to coordinates \eqref{eq35}. 

\begin{lemma}\label{lemma::CurvatureSemiGeoCoord}
If $\gamma(t)=\psi(u(t),v(t))$ is a smooth curve in $\mathbb{H}^2(r)$, its signed curvature in $\bbHr$ is given by
\begin{equation}\label{kk}
 \kappa = -\frac{1}{\Vert\dot{\gamma}(t)\Vert^3}\left(  \frac{2\dot{u}^2}{r}\sinh \frac{u}{r}+\frac{\dot{v}^2}{r}\cosh^2\frac{u}{r}\sinh\frac{u}{r}\right).
\end{equation}
\end{lemma}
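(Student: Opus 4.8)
The quantity $\kappa$ is the signed geodesic curvature of $\gamma$ inside the spacelike (hence Riemannian) surface $\bbHr\subset\mathbb{E}_1^3$, so it is intrinsic and is completely determined by the induced metric \eqref{metric}. The plan is to compute it extrinsically in the hyperboloid model, which is the most economical route here: since the position vector $\gamma/r$ is the (timelike) unit normal of $\bbHr$ along $\gamma$, the signed geodesic curvature of an arbitrarily parametrized curve is, up to the orientation sign fixed below,
\begin{equation*}
 \kappa = \frac{1}{r\,\Vert\dot\gamma\Vert^{3}}\,\langle\ddot\gamma,\gamma\times_1\dot\gamma\rangle_1,
\end{equation*}
where $\gamma\times_1\dot\gamma$ denotes the Lorentzian cross product, so that the numerator is the mixed product of $\gamma$, $\dot\gamma$, $\ddot\gamma$. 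Equivalently, one may read $\kappa$ off from \eqref{metric} through the Christoffel symbols of the orthogonal metric $\rmd u^2+\cosh^2\frac{u}{r}\,\rmd v^2$ and the in-surface rotation by $\pi/2$; both routes must agree because $\kappa$ is intrinsic, but the extrinsic one exploits the explicit coordinates already available.

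First I would expand $\dot\gamma=\psi_u\dot u+\psi_v\dot v$ and differentiate once more, writing $\ddot\gamma$ in the moving frame built from \eqref{puv}. The key simplification is that $\beta_v$ is a geodesic, so $\psi_{uu}=\frac{1}{r^{2}}\psi$ is parallel to $\gamma$ and therefore contributes nothing to the mixed product. What remains is to evaluate the mixed products of $\gamma$ with the frame vectors $\psi_u,\psi_v$ and with the mixed second derivatives $\psi_{uv},\psi_{vv}$; using the explicit coordinates in \eqref{eq35}--\eqref{puv}, each of these is an elementary $3\times 3$ determinant. Substituting them back and simplifying repeatedly with $\cosh^{2}\frac{u}{r}-\sinh^{2}\frac{u}{r}=1$ should then produce the expression \eqref{kk}.

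I expect the only genuine obstacle to be bookkeeping: keeping the several mixed products consistent and, above all, pinning down the global sign dictated by the choice of orientation (the sign of the Lorentzian cross product and of the unit normal $\gamma/r$), so that the final answer carries the minus sign displayed in \eqref{kk}. A clean way to calibrate this sign is to test the formula on the coordinate curves $u=\mathrm{const}$, i.e.\ the curves equidistant from $\ell$, whose geodesic curvature is known to equal $\frac{1}{r}\tanh\frac{u}{r}$ in absolute value; matching this value fixes the orientation convention once and for all, after which the general formula \eqref{kk} follows from the routine algebra above.
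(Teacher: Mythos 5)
Your approach is correct, but it takes a genuinely different route from the paper's own proof of this lemma. The paper stays entirely intrinsic: it quotes the general coordinate formula for the signed geodesic curvature of $t\mapsto(u(t),v(t))$ in terms of Christoffel symbols, computes the symbols of the metric \eqref{metric} (only $\Gamma_{22}^1=-\frac{1}{r}\sinh\frac{u}{r}\cosh\frac{u}{r}$ and $\Gamma_{12}^2=\frac{1}{r}\tanh\frac{u}{r}$ survive), and substitutes. You instead work extrinsically in $\mathbb{E}_1^3$, using that $\gamma/r$ is the unit normal of $\bbHr$ and writing $\kappa$ as the Lorentzian mixed product of $\gamma,\dot\gamma,\ddot\gamma$ divided by $r\Vert\dot\gamma\Vert^3$, with the clean observation that $\psi_{uu}=\frac{1}{r^2}\psi$ drops out of the determinant. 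This is legitimate, and it is in fact exactly the method the paper itself uses later, in the proof of Lemma \ref{lemma::KappaInH2} (Eq. \eqref{eq::CurvatureInH2}); executing your plan amounts to specializing that computation to the parametrization \eqref{eq35}--\eqref{puv}. What the paper's route buys is brevity and model-independence (one substitution once the symbols are listed); what yours buys is that the same machinery serves both this lemma and Lemma \ref{lemma::KappaInH2}, and your calibration of the sign on the equidistant curves $u=\mathrm{const}$ is a sensible way to pin down the orientation convention.

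One caveat you should be aware of: carried out honestly, your computation (and the paper's) yields
\begin{equation*}
\kappa=-\frac{1}{\Vert\dot\gamma\Vert^3}\left(\cosh\tfrac{u}{r}\,(\dot u\ddot v-\ddot u\dot v)+\frac{\dot v\sinh\frac{u}{r}}{r}\left(2\dot u^2+\dot v^2\cosh^2\tfrac{u}{r}\right)\right),
\end{equation*}
not the expression printed in \eqref{kk}, which lacks the second-order term and two factors of $\dot v$; this is evidently a typo in the statement, since the full expression above is the one actually used in the proof of Lemma \ref{lemma::GeneralEL-EqsScalarVelocity} to arrive at \eqref{el0}. Your own test on $u=\mathrm{const}$ curves would flag the discrepancy: the printed \eqref{kk} gives $-\tanh(u/r)/(r\vert\dot v\vert)$, which is not parametrization-invariant, whereas the corrected formula gives $\mp\frac{1}{r}\tanh\frac{u}{r}$, as it should. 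So do not expect the "routine algebra" to land on \eqref{kk} verbatim; it lands on the formula above, which is the statement the rest of the paper actually relies on.
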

\begin{proof}
Let $\Gamma_{ij}^k$ be the Christoffel symbols of the metric $\rmd s^2$ associated with the coordinates \eqref{eq35}. The expression for $\kappa$ is then given by 
\[
 \kappa = \frac{\sqrt{g_{ij}}\left(\Gamma_{22}^1\dot{v}^3-\Gamma_{11}^2\dot{u}^3-(2\Gamma_{12}^2-\Gamma_{11}^1)\dot{u}^2\dot{v}+(2\Gamma_{12}^1-\Gamma_{22}^2)\dot{u}\dot{v}^2+\ddot{u}\dot{v}-\dot{u}\ddot{v}\right)}{(g_{11}\dot{u}^2+2g_{12}\dot{u}\dot{u}+g_{22}\dot{v}^2)^{3/2}}.
\]
For the metric $\rmd s^2$ given in \eqref{metric}, we have $g_{11}=1$, $g_{12}=0$, and $g_{22}=\cosh^2\frac{u}{r}$. As a consequence, $\Gamma_{11}^1=\Gamma_{12}^1=\Gamma_{11}^2=\Gamma_{22}^2=0$ and 
$$
  \Gamma_{22}^1=-\frac{1}{r}\sinh\frac{u}{r}\cosh\frac{u}{r},\quad \Gamma_{12}^2=\frac{1}{r}\tanh \frac{u}{r}.
$$
Now, Eq. \eqref{kk} is immediate.
\end{proof}

\begin{lemma}\label{lemma::GeneralEL-EqsScalarVelocity}
Consider the functional
\begin{equation}\label{eq::GeneralFunctionalSemi-GeodCoord}
 \mathcal{E}_f[u(t),v(t)] = \int_a^b f(u,v) \Vert\dot{\gamma}(t)\Vert\, \rmd t,
\end{equation}
where $f$ is some smooth, positive function. The Euler-Lagrange equations of $\mathcal{E}_f$ are
\begin{equation}\label{el0}
\begin{split}
 \dot{v}\left[f\cosh\frac{u}{r}\left(\kappa-\frac{\dot{v}f_u\cosh\frac{u}{r}}{f\Vert\dot{\gamma}\Vert}  \right)+\frac{\dot{u}f_v}{\Vert\dot{\gamma}\Vert}\right] &= 0\\
 \dot{u}\left[f\cosh\frac{u}{r}\left(\kappa-\frac{\dot{v}f_u\cosh\frac{u}{r}}{f\Vert\dot{\gamma}\Vert}\right)+\frac{\dot{u}f_v}{\Vert\dot{\gamma}\Vert}\right] &= 0,
 \end{split}
\end{equation}
where $\kappa$ is the geodesic curvature of $\gamma$ in $\mathbb{H}^2(r)$. If, in addition, $\gamma$ is regular, then $\gamma$ is a solution of the Euler-Lagrange equations if, and only if, 
\begin{equation}\label{k-c}
 \kappa=\frac{1}{f\Vert\dot{\gamma}\Vert}\left(\dot{v} f_u\cosh\frac{u}{r}-\frac{\dot{u}f_v}{\cosh\frac{u}{r}}\right). 
\end{equation}
\end{lemma}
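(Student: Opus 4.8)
The plan is to read \eqref{el0} off the Euler--Lagrange equations of the Lagrangian $L(u,v,\dot u,\dot v)=f(u,v)W$, where $W=\Vert\dot\gamma\Vert=\sqrt{\dot u^2+\dot v^2\cosh^2\frac{u}{r}}$, and then to use the reparametrization invariance of $\mathcal{E}_f$ to pass to the single scalar equation \eqref{k-c}. First I would record $L_{\dot u}=f\dot u/W$, $L_{\dot v}=f\dot v\cosh^2\frac{u}{r}/W$, $L_u=f_uW+f\dot v^2\cosh\frac{u}{r}\sinh\frac{u}{r}/(rW)$, and $L_v=f_vW$, the last being simple because $W$ does not depend on $v$. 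Writing $E_u=L_u-\frac{\rmd}{\rmd t}L_{\dot u}$ and $E_v=L_v-\frac{\rmd}{\rmd t}L_{\dot v}$, the whole statement reduces to exhibiting a single scalar $B$, the bracket in \eqref{el0}, with $E_u=-\dot v\,B$ and $E_v=\dot u\,B$.

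The structural fact that makes this work is that $L$ is positively homogeneous of degree one in $(\dot u,\dot v)$, so $\mathcal{E}_f$ in \eqref{eq::GeneralFunctionalSemi-GeodCoord} is invariant under orientation-preserving reparametrizations. Euler's relation $\dot u L_{\dot u}+\dot v L_{\dot v}=L$ then yields the identity $\dot u\,E_u+\dot v\,E_v\equiv 0$, valid for every curve. Hence the covector $(E_u,E_v)$ is everywhere proportional to $(\dot v,-\dot u)$, which is exactly the factorized shape of \eqref{el0}. In practice this means I only need to compute the simpler equation $E_v$, factor out a common $\dot u$, and read off $B$; the $u$-equation is then forced by the identity, which doubles as a check on the algebra.

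The heart of the matter is to recognize the geodesic curvature inside $E_v$. Expanding $\frac{\rmd}{\rmd t}L_{\dot v}$ separates into terms carrying $\dot f=f_u\dot u+f_v\dot v$ and terms proportional to $f$ alone. The former, combined with the force $f_vW$ and simplified through $W^2=\dot u^2+\dot v^2\cosh^2\frac{u}{r}$, produce precisely $-\dot u\dot v f_u\cosh^2\frac{u}{r}/W+\dot u^2 f_v/W$. The latter --- the $\ddot v$ term, the term from differentiating $\cosh^2\frac{u}{r}$, and the term from differentiating $1/W$ --- collapse, after invoking the curvature formula \eqref{kk} of Lemma \ref{lemma::CurvatureSemiGeoCoord} (which supplies the combination $\cosh\frac{u}{r}(\ddot u\dot v-\dot u\ddot v)$ together with the accompanying $\tanh\frac{u}{r}$ and $\sinh\frac{u}{r}\cosh\frac{u}{r}$ terms), into $f\cosh\frac{u}{r}\,\dot u\,\kappa$. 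Factoring the common $\dot u$ gives $E_v=\dot u\,B$ with $B=f\cosh\frac{u}{r}\big(\kappa-\dot v f_u\cosh\frac{u}{r}/(fW)\big)+\dot u f_v/W$. I expect this curvature bookkeeping to be the main obstacle and the only delicate point; a good sanity check is $f\equiv\mathrm{const}$, where $B=f\cosh\frac{u}{r}\,\kappa$ and \eqref{k-c} correctly recovers the geodesics.

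Finally, for the equivalence with \eqref{k-c} I would invoke regularity directly. Since $\gamma$ is regular, $W>0$, so $(\dot u,\dot v)$ never vanishes and at each parameter at least one of $\dot u,\dot v$ is nonzero; then $\dot v\,B=0$ and $\dot u\,B=0$ together force $B=0$ on the open set where the curve moves, hence everywhere by continuity. Rewriting $B=0$ as $f\cosh\frac{u}{r}\,\kappa=\dot v f_u\cosh^2\frac{u}{r}/W-\dot u f_v/W$ and dividing by $f\cosh\frac{u}{r}$ is exactly \eqref{k-c}. The converse is immediate, since $B=0$ makes both equations in \eqref{el0} hold identically.
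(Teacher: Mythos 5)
Your route is genuinely different from the paper's, and leaner. The paper computes \emph{both} Euler--Lagrange equations of $\mathcal{L}=f\Vert\dot\gamma\Vert$ and then, ``after some manipulations'', massages each one separately into the factored forms $\dot v[\,\cdot\,]=0$ and $\dot u[\,\cdot\,]=0$, only at the end recognizing the curvature inside the common bracket. You compute only the $v$-equation $E_v$, factor out $\dot u$ to read off the bracket $B$, and get the $u$-equation from degree-one homogeneity: Euler's relation $\dot u L_{\dot u}+\dot v L_{\dot v}=L$ gives $\dot u E_u+\dot v E_v\equiv 0$. This halves the computation and explains conceptually \emph{why} the two equations carry the same bracket (parametrization invariance makes them dependent), a fact the paper merely observes a posteriori; your sanity check that $f\equiv\mathrm{const}$ yields geodesics is a further safeguard the paper does not have. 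Your bookkeeping of $E_v$ is correct: the $\dot f$-terms give $(\dot u^2 f_v-\dot u\dot v f_u\cosh^2\frac{u}{r})/\Vert\dot\gamma\Vert$ and the remaining terms assemble to $f\dot u\cosh\frac{u}{r}\,\kappa$, provided one uses the full curvature expression from the proof of Lemma \ref{lemma::CurvatureSemiGeoCoord} (including $\cosh\frac{u}{r}(\ddot u\dot v-\dot u\ddot v)$), which you correctly do --- the displayed formula \eqref{kk} accidentally drops those terms, as does the statement you would be quoting, so you should cite the Christoffel-symbol computation rather than \eqref{kk} verbatim.

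One step needs patching. From $\dot u E_u+\dot v E_v\equiv0$ and $E_v=\dot u B$ you get $\dot u(E_u+\dot v B)=0$, hence $E_u=-\dot v B$ only where $\dot u\neq 0$ (and on the closure of that set by continuity). Along a regular curve with $\dot u\equiv 0$ on an interval --- a coordinate line $u=\mathrm{const}$, a perfectly legitimate competitor --- the identity says nothing about $E_u$: there the Euler--Lagrange system reduces to $E_u=0$ while \eqref{el0} reduces to $B=0$, and your argument has not related these two conditions. The fix is cheap. Note that $E_u$, $E_v$, $B$ are smooth functions of the jet variables $(u,v,\dot u,\dot v,\ddot u,\ddot v)$ on the open set where $(\dot u,\dot v)\neq(0,0)$, and both identities $E_v=\dot u B$ and $\dot u E_u+\dot v E_v=0$ hold there \emph{as functions of the jet variables}, not merely along a given curve. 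Since $\{\dot u\neq 0\}$ is dense in that set and $E_u+\dot v B$ is continuous, $E_u\equiv-\dot v B$ holds identically on the jet space, hence pointwise along every regular curve, including the degenerate ones. (Alternatively, verify $E_u=-\dot v B$ directly when $\dot u=\ddot u=0$; it is a two-line computation.) With that observation inserted, your proof is complete and the final regularity argument goes through as you wrote it.
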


\begin{proof} 
The Euler-Lagrange equations associated with the Lagrangian $\mathcal{L}=f\Vert\dot{\gamma}\Vert$ are
\begin{equation}\label{el1}
\frac{\partial \mathcal{L}}{\partial u}-\frac{\rmd}{\rmd t}\frac{\partial \mathcal{L}}{\partial \dot{u}} = 0 \quad \mbox{and} \quad \frac{\partial \mathcal{L}}{\partial v}-\frac{\rmd}{\rmd t}\frac{\partial \mathcal{L}}{\partial \dot{v}} = 0.
\end{equation}
These equations are
\begin{align*}
f_u\Vert\dot{\gamma}\Vert+\frac{f\cosh\frac{u}{r}\sinh\frac{u}{r}\dot{v}^2}{r\Vert\dot{\gamma}\Vert}-\frac{\rmd}{\rmd t}\left(\frac{f\dot{u}}{\Vert\dot{\gamma}\Vert}\right)&=0\\
 f_v\Vert\dot{\gamma}\Vert-\frac{\rmd}{\rmd t}\left(\frac{f\dot{v}\cosh^2\frac{u}{r}}{\Vert\dot{\gamma}\Vert}\right)&=0.
 \end{align*}
 Equivalently, we have
 \begin{align*}
 \frac{\dot{u}^2f_v-\dot{u}\dot{v}f_u\cosh^2 \frac{u}{r}-\frac{2}{r}\dot{u}\dot{v}f\sinh\frac{u}{r}\cosh\frac{u}{r}}{\Vert\dot{\gamma}\Vert}-
 f\cosh^2\tfrac{u}{r}\frac{\rmd}{\rmd t}\left(\frac{\dot{v}}{\Vert\dot{\gamma}\Vert}\right)&=0\\
 \frac{\dot{v}^2f_u\cos^2 \frac{u}{r}-\dot{u}\dot{v}f_v-\frac{1}{r}\dot{v}^2f\sinh\frac{u}{r}\cosh\frac{u}{r}}{\Vert\dot{\gamma}\Vert}-
 f \frac{\rmd}{\rmd t}\left(\frac{\dot{u}}{\Vert\dot{\gamma}\Vert}\right)&=0.
 \end{align*}
After some manipulations, both equations can be expressed as 
% \begin{align*}
% \dot{v}\left(\frac{\dot{u}f_v-\dot{v}f_u\cosh^2\frac{u}{r} }{\Vert\dot{\gamma}\Vert}-\frac{f\cosh\frac{u}{r}}{\Vert\dot{\gamma}\Vert^3}
% \left(\cosh\tfrac{u}{r}(\dot{u}\ddot{v}-\dot{v}\ddot{u})+\frac{\dot{v}\sinh\frac{u}{r}}{r}(2\dot{u}^2+\cosh^2\tfrac{u}{r}\dot{v}^2)\right)\right)&=0\\
% \dot{u}\left(\frac{\dot{u}f_v-f_u\dot{v}\cosh^2\frac{u}{r} }{\Vert\dot{\gamma}\Vert}-\frac{f\cosh\frac{u}{r}}{\Vert\dot{\gamma}\Vert^3}
% \left(\cosh\frac{u}{r}(\dot{u}\ddot{v}-\dot{v}\ddot{u})+\frac{\dot{v}\sinh\frac{u}{r}}{r}(2\dot{u}^2+\cosh^2\frac{u}{r}\dot{v}^2)\right)\right)&=0.
% \end{align*}
\begin{eqnarray*}
0 & = & \dot{v}\Big[\frac{\dot{u}f_v-\dot{v}f_u\cosh^2\frac{u}{r} }{\Vert\dot{\gamma}\Vert} \\
&  & - \frac{f\cosh\frac{u}{r}}{\Vert\dot{\gamma}\Vert^3}
\left(\cosh\tfrac{u}{r}(\dot{u}\ddot{v}-\dot{v}\ddot{u})+\frac{\dot{v}\sinh\frac{u}{r}}{r}(2\dot{u}^2+\cosh^2\tfrac{u}{r}\dot{v}^2)\right)\Big]
\end{eqnarray*}
and
\begin{eqnarray*}
0 & = & \dot{u}\Big[\frac{\dot{u}f_v-f_u\dot{v}\cosh^2\frac{u}{r} }{\Vert\dot{\gamma}\Vert} \\
& & -\frac{f\cosh\frac{u}{r}}{\Vert\dot{\gamma}\Vert^3}
\left(\cosh\frac{u}{r}(\dot{u}\ddot{v}-\dot{v}\ddot{u})+\frac{\dot{v}\sinh\frac{u}{r}}{r}(2\dot{u}^2+\cosh^2\frac{u}{r}\dot{v}^2)\right)\Big].
\end{eqnarray*}
The identities \eqref{el0} are now obtained using the above two equations and the expression of $\kappa$ given in Eq. \eqref{kk}.
\end{proof}

Finally, we are in a position to characterize  the extrinsic catenaries in the hyperbolic plane.

\begin{theorem} \label{t4}
Let $\gamma(t)=\psi(u(t),v(t))$ be a regular curve in $\mathbb{H}^2(r)$. Then, $\gamma$ is an extrinsic catenary if, and only if, its curvature $\kappa$ in $\mathbb{H}^2(r)$ satisfies:
\begin{enumerate}
\item Elliptic type:
\begin{equation}
 \kappa = \frac{\dot{v}\cosh^2\frac{u}{r} }{r\sinh\frac{u}{r}\Vert\dot{\gamma}\Vert}.
\end{equation}
\item Hyperbolic type: 
\begin{equation}
 \kappa = \frac{1}{r \Vert\dot{\gamma}\Vert}\left( - \frac{\dot{u}\sinh\frac{v}{r}}{ \cosh\frac{u}{r}\cosh\frac{v}{r}} +\dot{v} \sinh\frac{u}{r}\right) .
\end{equation}
\item Parabolic type: 
\begin{equation}
 \kappa =\frac{1}{r\Vert\dot{\gamma}\Vert}\left(\frac{\dot{u}}{\cosh\frac{u}{r}} + \dot{v}\sinh\frac{u}{r}\right).
\end{equation}

 \end{enumerate}
\end{theorem}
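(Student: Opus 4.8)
The plan is to reduce all three cases to the single Euler--Lagrange identity \eqref{k-c} of Lemma \ref{lemma::GeneralEL-EqsScalarVelocity}. That lemma already establishes that a regular curve $\gamma$ is a critical point of a functional $\mathcal{E}_f=\int f\Vert\dot\gamma\Vert\,\rmd t$ if and only if
\[
 \kappa=\frac{1}{f\Vert\dot\gamma\Vert}\left(\dot v\,f_u\cosh\tfrac{u}{r}-\frac{\dot u\,f_v}{\cosh\tfrac{u}{r}}\right).
\]
Since each of $\mathcal{W}_{\mathcal{E}}$, $\mathcal{W}_{\mathcal{H}}$, $\mathcal{W}_{\mathcal{P}}$ is of exactly this form with a positive weight $f$ read off from \eqref{fu1}, \eqref{fu2}, and \eqref{fu3}, the whole argument amounts to substituting the appropriate $f$ into this identity, computing the partials $f_u$ and $f_v$, and simplifying with elementary hyperbolic identities. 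No further geometric input is needed, and because \eqref{k-c} is stated as an equivalence for regular curves, the resulting curvature conditions are automatically both necessary and sufficient, which is precisely the ``if and only if'' asserted in the theorem.

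Concretely, I would treat the three weights in turn. For the elliptic type, $f=\sinh\tfrac{u}{r}$ gives $f_u=\tfrac{1}{r}\cosh\tfrac{u}{r}$ and $f_v=0$, so the bracket collapses to its first term and produces $\kappa=\dot v\cosh^2\tfrac{u}{r}/(r\sinh\tfrac{u}{r}\Vert\dot\gamma\Vert)$. For the hyperbolic type, $f=\cosh\tfrac{u}{r}\cosh\tfrac{v}{r}$ has both partials nonzero; after cancelling the common factor $\cosh\tfrac{u}{r}\cosh\tfrac{v}{r}$ coming from $f$, the first term reduces to $\dot v\sinh\tfrac{u}{r}$ and the second to $-\dot u\sinh\tfrac{v}{r}/(\cosh\tfrac{u}{r}\cosh\tfrac{v}{r})$, matching the stated formula. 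For the parabolic type, $f=\rme^{-v/r}\cosh\tfrac{u}{r}$ satisfies $f_v=-\tfrac{1}{r}f$, so the overall factor $\rme^{-v/r}$ cancels and the second contribution becomes $+\dot u/\cosh\tfrac{u}{r}$.

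There is no genuine obstacle here; the substance of the result lives entirely in Lemmas \ref{lemma::CurvatureSemiGeoCoord} and \ref{lemma::GeneralEL-EqsScalarVelocity}. The only point demanding attention is the bookkeeping of signs: the relative sign of the ``$\dot u$'' term in $\kappa$ is dictated by the sign of $f_v$, which vanishes in the elliptic case, is positive in the hyperbolic case (yielding a minus in $\kappa$), and is negative in the parabolic case (yielding a plus). Keeping careful track of this, together with the routine hyperbolic simplifications, completes the proof.
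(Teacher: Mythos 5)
Your proposal is correct and follows exactly the paper's own route: the paper proves Theorem \ref{t4} precisely by substituting $f=\sinh\frac{u}{r}$, $f=\cosh\frac{u}{r}\cosh\frac{v}{r}$, and $f=\rme^{-v/r}\cosh\frac{u}{r}$ into Eq.~\eqref{k-c} of Lemma \ref{lemma::GeneralEL-EqsScalarVelocity}, which is what you do (with the computations written out more explicitly than the paper bothers to). Your identification of where the ``if and only if'' comes from---the equivalence in \eqref{k-c} for regular curves---is also the same as in the paper.
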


\begin{proof} 
Just apply Lemma \ref{lemma::GeneralEL-EqsScalarVelocity} to the functionals in Eqs. \eqref{fu1}, \eqref{fu2}, and \eqref{fu3}. In other words,  choose $f$ in Eq. \eqref{k-c} as $\sinh\frac{u}{r}$, $\cosh\frac{u}{r}\cosh\frac{v}{r}$, and $\rme^{-\frac{v}{r}}\cosh\frac{u}{r}$, respectively.
% Applying \eqref{k-c} to the function $f(u,v)$ being $\sinh\frac{u}{r}$, $\cosh\frac{u}{r}\cosh\frac{v}{r}$, and $\rme^{-\frac{v}{r}}\cosh\frac{u}{r}$ that appear in the integrands of \eqref{fu1}, \eqref{fu2} and \eqref{fu3}, respectively.
\end{proof}
 
\begin{remark} 
In the variational formulation of the extrinsic catenary problem, Section \ref{sec1}, we implicitly assume that the chain's length is prescribed. Thus, there is a constraint $\int_a^b\Vert\dot{\gamma}(t)\Vert\, \rmd t=c$.   Consequently, the quantity $\mbox{dist}(\gamma(t),P^2)$ in \eqref{exc1}  should be replaced by $\mbox{dist}(\gamma(t),P^2)+\lambda$, where $\lambda$ is a Lagrange multiplier. In the functional \eqref{eq::GeneralFunctionalSemi-GeodCoord}, adding a Lagrange multiplier implies that the function $f$ should be replaced by  $f+\lambda$. The corresponding Euler-Lagrange equations coincide with those of Theorem \ref{t4} after a translation in $\mathbb{E}_1^4$ of the plane $P^2$.
\end{remark}

 %%%%%%%%%%%%%%%%%%%%%
\section{Characterization of extrinsic catenaries}
\label{sec22}
 %%%%%%%%%%%%%%%%%%%%%%%%%%%%%%%%

In this section, we provide a coordinate-free characterization of extrinsic catenaries with the help of Killing vector fields of $\mathbb{E}_1^4$. The motivation comes from the Euclidean catenary 
$y(x)=\frac{1}{a}\cosh(ax+b)$ in the $(x,y)$-plane. This curve is the solution to the hanging chain problem in the Euclidean plane. In this setting, the reference line is the horizontal line $\ell$ of equation $y=0$, and the corresponding Euler-Lagrange equation is 
\begin{equation}\label{euclideo1}
 \frac{y''}{(1+y'^2)^{3/2}}=\frac{1}{y\sqrt{1+y'^2}}.
\end{equation}
The left-hand side is just the curvature $\kappa$ of the curve $y=y(x)$. The right-hand side is the Euclidean product ${\bf n}\cdot\mathbf{e}_y$, where $\mathbf{n}=(-y',1)/\sqrt{1+y'^2}$ is the unit normal of $y=y(x)$ and $-\mathbf{e}_y=(0,-1)$ is the direction of gravity in Euclidean plane. Thus, Eq. \eqref{euclideo1} can be expressed as 
\begin{equation}\label{euclideo2}
\kappa = \frac{{\bf n}\cdot\mathbf{e}_y}{\mbox{dist}(\gamma,\ell)}.
\end{equation}
Observe that the direction of gravity $-\mathbf{e}_y$ also has a geometric interpretation. It gives the direction of the geodesics orthogonal to $\ell$ used to compute $\mbox{dist}(\gamma,\ell)$.
 
We can provide a similar result for extrinsic catenaries in hyperbolic space. However, since the potential energy \eqref{exc1} measures the distance in $\mathbb{E}_1^4$ from the plane $P^2$, it is natural to expect that the right-hand side of Eq. \eqref{euclideo2} will incorporate the extrinsic nature of \eqref{exc1}. Such a characterization of hyperbolic extrinsic geodesics will be obtained as a corollary of the more general result

\begin{theorem}\label{t1}
A regular curve $\gamma(t)=\psi(u(t),v(t))$ in $\mathbb{H}^2(r)$ is a critical point of the functional \eqref{eq::GeneralFunctionalSemi-GeodCoord} if, and only if, its curvature $\kappa$ in $\mathbb{H}^2(r)$ satisfies

\begin{equation}\label{et1}
 \kappa = - \frac{\langle\mathbf{n}, \nabla f\rangle}{f},
 \end{equation}
 where $\mathbf{n}$ is the unit normal of $\gamma$ and $\nabla f$ is the gradient vector field of $f$. 
\end{theorem}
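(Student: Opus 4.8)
The plan is to deduce Theorem \ref{t1} directly from the curvature characterization already obtained in Lemma \ref{lemma::GeneralEL-EqsScalarVelocity}. Since $\gamma$ is assumed regular, that lemma asserts that $\gamma$ is a critical point of $\mathcal{E}_f$ if and only if
\[
 \kappa = \frac{1}{f\Vert\dot{\gamma}\Vert}\left(\dot{v}f_u\cosh\frac{u}{r} - \frac{\dot{u}f_v}{\cosh\frac{u}{r}}\right).
\]
Therefore the entire content of the theorem is to recognize that the right-hand side above equals $-\langle\mathbf{n},\nabla f\rangle/f$; that is, to rewrite this coordinate expression in intrinsic, frame-independent form. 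No new variational computation is needed, and the work is purely that of translating Eq. \eqref{k-c} into the language of $\nabla f$ and $\mathbf{n}$.

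First I would pass to the orthonormal frame adapted to the semi-geodesic metric \eqref{metric}. Because $g_{11}=1$, $g_{12}=0$, and $g_{22}=\cosh^2\frac{u}{r}$, the fields $\mathbf{e}_1=\psi_u$ and $\mathbf{e}_2=\psi_v/\cosh\frac{u}{r}$ form a positively oriented orthonormal basis of $T\bbHr$ along $\gamma$. In this frame the gradient of $f$ reads
\[
 \nabla f = f_u\,\mathbf{e}_1 + \frac{f_v}{\cosh\frac{u}{r}}\,\mathbf{e}_2,
\]
using $g^{11}=1$ and $g^{22}=\cosh^{-2}\frac{u}{r}$, while the velocity decomposes as $\dot{\gamma}=\dot{u}\,\mathbf{e}_1+\dot{v}\cosh\frac{u}{r}\,\mathbf{e}_2$, so that the unit tangent is $T=\dot{\gamma}/\Vert\dot{\gamma}\Vert$.

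The unit normal $\mathbf{n}$ is obtained by rotating $T$ a right angle inside $T\bbHr$; with the orientation fixed to agree with the sign convention for $\kappa$ of Lemma \ref{lemma::CurvatureSemiGeoCoord}, this gives
\[
 \mathbf{n} = \frac{1}{\Vert\dot{\gamma}\Vert}\left(-\dot{v}\cosh\frac{u}{r}\,\mathbf{e}_1 + \dot{u}\,\mathbf{e}_2\right).
\]
Taking the inner product with $\nabla f$ and using orthonormality yields
\[
 \langle\mathbf{n},\nabla f\rangle = \frac{1}{\Vert\dot{\gamma}\Vert}\left(-\dot{v}f_u\cosh\frac{u}{r} + \frac{\dot{u}f_v}{\cosh\frac{u}{r}}\right),
\]
and dividing by $-f$ reproduces the right-hand side of \eqref{k-c} term by term, which is precisely \eqref{et1}.

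The step I expect to be the genuine obstacle is the orientation bookkeeping of the previous paragraph: the identity \eqref{et1} is sign-sensitive, and it holds only if the right-angle rotation carrying $T$ to $\mathbf{n}$ is taken with the same orientation that underlies the signed curvature $\kappa$ in Lemma \ref{lemma::CurvatureSemiGeoCoord}. The opposite choice would flip the sign and break the formula. I would secure this by checking both expressions on a single unambiguous curve --- for instance a coordinate curve $u=\mathrm{const}$, whose geodesic curvature and normal direction are immediate to read off --- confirming that the convention $\mathbf{n}=(-\dot{v}\cosh\frac{u}{r}\,\mathbf{e}_1+\dot{u}\,\mathbf{e}_2)/\Vert\dot{\gamma}\Vert$ is the correct one. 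Once the sign is pinned down, the remaining verification is the one-line algebraic identity displayed above.
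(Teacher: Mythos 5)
Your proof is correct and follows essentially the same route as the paper: both start from Eq.~\eqref{k-c} of Lemma~\ref{lemma::GeneralEL-EqsScalarVelocity}, write down $\mathbf{n}$ and $\nabla f$ (your orthonormal-frame expressions coincide with the paper's coordinate-frame ones, since $\mathbf{e}_1=\partial_u$ and $\mathbf{e}_2=\partial_v/\cosh\frac{u}{r}$), and identify the inner product $\langle\mathbf{n},\nabla f\rangle$ with $-f\kappa$. Your sign convention for $\mathbf{n}$ agrees with the paper's, so the orientation check you flag as the delicate step indeed goes through.
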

\begin{proof}
The tangent vector of $\gamma$ is $\dot{\gamma}=\dot{u}\partial_u+\dot{v}\partial_v$. Thus, the unit normal $\mathbf{n}$ is 
\begin{equation}\label{pt1}
 \mathbf{n}=\frac{1}{\Vert\dot{\gamma}\Vert}\left(-\dot{v}\cosh\frac{u}{r}\partial_u+\frac{\dot{u}}{\cosh\frac{u}{r}}\partial_v\right),
\end{equation}
where $\{\partial_u,\partial_v\}$ is the coordinate basis determined by the parametrization $\psi$. The expression of the gradient of $f$ with respect to this basis is
\begin{equation}\label{pt2}
 \nabla f= f_u\partial_u +\frac{f_v}{\cosh^2\frac{u}{r}} \partial_v.
\end{equation}
It follows from Eqs. \eqref{pt1} and \eqref{pt2} that
$$
 \langle\mathbf{n},\nabla f\rangle=\frac{1}{\Vert\dot{\gamma}\Vert}\left(-\dot{v}\cosh\frac{u}{r}f_u+\frac{\dot{u}}{\cosh\frac{u}{r}}f_v\right).
$$
This equation, together with Eq. \eqref{k-c}, finally proves Eq. \eqref{et1}.
\end{proof} 

%When the function $f$ depends only on the distance to a plane $P^2$, its gradient will point in the direction of the velocity of the geodesics realizing the distance from a point of $\gamma$ to $P^2$. In particular, for the functional \eqref{exc1} associated with the class of extrinsic catenaries, we obtain the following characterization.

Finally, we are in a position to provide the following characterization for extrinsic catenaries.

\begin{theorem}\label{tkilling} 
A regular curve $\gamma(t)=\psi(u(t),v(t))$ in $\mathbb{H}^2(r)$ is an extrinsic catenary if, and only if, its curvature $\kappa$ in $\mathbb{H}^2(r)$ satisfies
\begin{equation}\label{et2}
 \kappa(t) = -\frac{\langle\mathbf{n}(t), X(t)\rangle}{\mathrm{dist}(\gamma(t),P^2)},
\end{equation}
where $X$ is the Killing field in $\mathbb{E}_1^4$ of the geodesics in $\mathbb{E}_1^4$ connecting $\gamma$ to a point of $P^2$.
\end{theorem}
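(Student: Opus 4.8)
The plan is to deduce the statement directly from Theorem~\ref{t1}, so that everything reduces to identifying the intrinsic gradient $\nabla f$ with the tangential part of a Killing field. By Theorem~\ref{t1}, a regular curve $\gamma$ is a critical point of $\mathcal{E}_f$ exactly when $\kappa = -\langle\mathbf{n},\nabla f\rangle/f$. Since $\mathrm{dist}(\gamma,P^2)$ differs from the chosen $f$ only by a positive constant, and that constant cancels in the quotient $\langle\mathbf{n},\nabla f\rangle/f$, it suffices to show that $\nabla f$ is the tangential projection of the announced field $X$ and that the denominator may be rewritten as $\mathrm{dist}(\gamma,P^2)$.

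First I would observe that in each of the three causal cases the distance function is, up to a positive constant, the restriction to $\bbHr$ of a linear functional on the ambient space: writing $D(p)=\langle p,N\rangle_1$, the distance computations recorded before \eqref{fu1}--\eqref{fu3} give $N=\bfe_z$ (elliptic), $N=-\bfe_x$ (hyperbolic), and $N=-(\bfe_x+\bfe_y)$ up to scale (parabolic). In each case $N$ is precisely the $\langle\cdot,\cdot\rangle_1$-normal of the hyperplane spanned by $P^2$ inside $\mathbb{E}_1^3=[\bfe_x,\bfe_y,\bfe_z]$; note that $\gamma$ and every admissible $P^2$ lie in the slice $w=0$, so the ambient $\mathbb{E}_1^4$ reduces to $\mathbb{E}_1^3$ and the distances are unchanged. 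Being constant, $N$ generates a translation, hence is a Killing field of $\mathbb{E}_1^3$, and its integral lines are exactly the $\mathbb{E}_1^3$-geodesics orthogonal to $P^2$ along which the weight is measured; this identifies $X=N$.

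Next I would compute the intrinsic gradient of a linear function. For $D=\langle p,N\rangle_1$ restricted to $\bbHr$ one has $\mathrm{d}D(V)=\langle V,N\rangle_1$ for every $V\in T_p\bbHr$, whence $\nabla D=N^{\top}$, the orthogonal projection of $N$ onto $T_p\bbHr$. The unit normal $\mathbf{n}$ of $\gamma$ lies in $T_p\bbHr$ and is therefore orthogonal to the position vector $p$, which is the $\mathbb{E}_1^3$-normal of $\bbHr$; consequently the normal component of $N$ contributes nothing, and $\langle\mathbf{n},\nabla D\rangle=\langle\mathbf{n},N^{\top}\rangle=\langle\mathbf{n},N\rangle_1=\langle\mathbf{n},X\rangle$. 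Substituting into the criterion of Theorem~\ref{t1} and using $D\propto f$ yields $\kappa=-\langle\mathbf{n},X\rangle/\mathrm{dist}(\gamma,P^2)$, which is \eqref{et2}.

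I expect the main obstacle to be the parabolic case. There the normal $N=-(\bfe_x+\bfe_y)$ is lightlike and lies inside $P^2$ itself, so the naive picture of a perpendicular segment joining $\gamma$ to its foot on $P^2$ breaks down; one must argue carefully that the degenerate distance is still represented by the linear functional $\langle\cdot,N\rangle_1$ and fix the scale of $N$ so that $\nabla D=X^{\top}$ holds exactly rather than merely up to a constant. A secondary point is to confirm that constant vector fields are the intended \emph{Killing fields of the connecting geodesics} in all three types, so that the geometric phrasing of the theorem matches the translation fields produced by the computation.
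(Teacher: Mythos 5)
Your proposal is correct, and it shares the paper's skeleton: both arguments reduce the theorem to Theorem~\ref{t1} and then identify $\nabla f$ with the tangential part of a constant (translation) Killing field, using tangency of $\mathbf{n}$ to $\bbHr$ to replace $X^{\top}$ by $X$ in the inner product. Where you genuinely differ is in how the key identity $\nabla f = X^{\top}$ is established. The paper does it by explicit computation: it writes out $X^{\top}$, $Y^{\top}$, $Z^{\top}$ from the projection formula \eqref{top} in semi-geodesic coordinates, separately computes $\nabla f$ from \eqref{pt2} for each of the three distance functions, and matches the resulting expressions case by case. You instead observe that each distance function is, up to a positive constant, the restriction to $\bbHr$ of a linear functional $D(p)=\langle p,N\rangle_1$ with $N=\bfe_z$, $-\bfe_x$, or a multiple of $-(\bfe_x+\bfe_y)$, so that $\nabla D = N^{\top}$ holds automatically (the differential of a restricted linear function is represented by the tangential projection of the constant vector); this disposes of all three cases uniformly, with no coordinate computation, and explains \emph{why} the identity holds rather than merely verifying it. The one caveat — which you correctly flag, and which the paper does not treat any more rigorously than you do — is the parabolic case: there $N$ is lightlike, hence cannot be normalized, the quotient $\langle\mathbf{n},X\rangle/\mathrm{dist}(\gamma,P^2)$ is \emph{not} invariant under rescaling $X$ alone, and the "distance" to a degenerate plane is itself a convention (the naive infimum over the plane is $-\infty$). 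For \eqref{et2} to hold on the nose one must use the matched pair $\mathrm{dist}(p,P^2)=\langle p,X\rangle_1$; the paper enforces this by fiat through its choice $Z=-\tfrac{1}{r\sqrt{2}}(\bfe_x+\bfe_y)$, calibrated to its stated distance $\tfrac{1}{\sqrt{2}}\rme^{-v/r}\cosh\tfrac{u}{r}$, and your proof would be complete once you fix the scale of $N$ by the same convention.
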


\begin{proof}
If $X$ is a vector field in $\mathbb{E}_1^4$, its tangent part $X^\top$ in $\mathbb{H}^2(r)$ is given by 
\begin{equation}\label{top}
X^\top=X+\frac{1}{r^2}\langle X,\psi\rangle\psi,
\end{equation}
where $\psi$ is the parametrization \eqref{eq35} of $\mathbb{H}^2(r)$. Consider now the three types of  Killing vector fields that determine the distance between a point of $\mathbb{H}^2(r)$ and a subspace $P^2$ depending on the causal character of $P^2$. Denote by $\{\mathbf{e}_x,\mathbf{e}_y,\mathbf{e}_z\}$ the canonical basis of $\mathbb{E}_1^3$. 

\begin{enumerate}
 \item If $P^2=[\bfe_{x},\bfe_{y}]$, the Killing vector field is  $X=\mathbf{e}_z$ and, from \eqref{top}, we have
 $$
  X^\top= \frac{1}{2} \sinh \frac{2 u}{r} \cosh \frac{v}{r}\mathbf{e}_x+\frac{1}{2} \sinh \frac{2 u}{r} \sinh \frac{v}{r}\mathbf{e}_y+\cosh ^2\frac{u}{r}\mathbf{e}_z.
 $$
 \item If $P^2=[\mathbf{e}_y,\mathbf{e}_z]$, the Killing vector field is $Y=-\mathbf{e}_x$ and, from \eqref{top}, we have
 $$
  Y^\top=(\cosh ^2\frac{u}{r} \cosh ^2\frac{v}{r}-1)\mathbf{e}_x+\frac{1}{2} \cosh ^2\frac{u}{r} \sinh \frac{2 v}{r}\mathbf{e}_y+\frac{1}{2} \sinh \frac{2 u}{r} \cosh \frac{v}{r}\mathbf{e}_z.
 $$
 \item If $P^2= [\mathbf{e}_{y}+\mathbf{e}_{x},\mathbf{e}_z]$, the Killing vector field $Z=-\frac{1}{r\sqrt{2}}(\mathbf{e}_x+\mathbf{e}_y)$ and, from \eqref{top}, we have
%  \begin{eqnarray*}
%   Z^\top & = & \left(1-e^{-\frac{v}{r}} \cosh ^2\frac{u}{r} \cosh \frac{v}{r}\right)\mathbf{e}_x \\
%   & & +\left[1+\frac{1}{2} \left(e^{-\frac{2 v}{r}}-1\right) \cosh ^2\frac{u}{r}\right] \mathbf{e}_y-\frac{1}{2} e^{-\frac{v}{r}} \sinh \frac{2 u}{r}\mathbf{e}_z.
%  \end{eqnarray*}
 \begin{eqnarray*}
  Z^\top & = & \frac{1}{r\sqrt{2}}\left(-1+\rme^{-\frac{v}{r}} \cosh ^2\frac{u}{r} \cosh \frac{v}{r}\right)\mathbf{e}_x \\
  & & +\frac{1}{r\sqrt{2}}\left[-1+\rme^{-\frac{ v}{r}}\cosh^2\frac{u}{r}\sinh\frac{v}{r}\right] \mathbf{e}_y+\frac{\rme^{-\frac{v}{r}}}{2r\sqrt{2}}  \sinh \frac{2 u}{r}\mathbf{e}_z.
 \end{eqnarray*}
\end{enumerate}
Let us compute the gradient $\nabla f$ in $\mathbb{H}^2(r)$ of the distances to each of the above planes $P^2$.
\begin{enumerate}
\item In the elliptic case, $f(u,v)=r \sinh\frac{u}{r}$. Using Eq. \eqref{pt2} and $\partial_u=\psi_u$ and $\partial_v=\psi_v$ in \eqref{puv}, we obtain that $\nabla f=X^\top$.
\item In the hyperbolic case, $f(u,v)=r\cosh\frac{u}{r}\cosh\frac{v}{r}$. Using Eq. \eqref{pt2} and $\partial_u=\psi_u$ and $\partial_v=\psi_v$ in \eqref{puv}, we obtain that $\nabla f=Y^\top$.
\item In the parabolic case, $f(u,v)=\frac{1}{\sqrt{2}}e^{-v/r}\cosh\frac{u}{r}$. Using Eq. \eqref{pt2} and $\partial_u=\psi_u$ and $\partial_v=\psi_v$ in \eqref{puv}, we obtain that $\nabla f= Z^\top$.
\end{enumerate}

Finally, applying Theorem \ref{t1} gives the desired result. Indeed, 
\[
 \kappa = -\frac{\langle\mathbf{n},\nabla f\rangle}{f} = -\frac{\langle\mathbf{n},X^{\top}\rangle}{\mathrm{dist}(\gamma,P^2)} = -\frac{\langle\mathbf{n},X\rangle}{\mathrm{dist}(\gamma,P^2)}.
\]
\end{proof}

\

%%%%%%%%%%%%%%%%%%%%%
\section{Surfaces of revolution in hyperbolic space}
\label{sec3}
%%%%%%%%%%%%%%%%%%%%%%%%%%%%%%%%%%%%%%

This section describes the three types of surfaces of revolution in $\mathbb{H}^3(r)$. To define them, we first exploit the fact that all isometries of $\mathbb{H}^{3}(r)$ are induced by orthogonal transformations of $\mathbb{E}_1^{4}$. Later, after introducing a convenient coordinate system for each surface type, we compute their mean curvature.

Let $P^k$ denote a $k$-dimensional subspace of $\mathbb{E}_1^{4}$ and $O(P^k)$ be the set of orthogonal transformations of $\mathbb{E}_1^{4}$ with positive determinant and that leave $P^k$ pointwise fixed. Following do Carmo-Dajczer \cite{carmodajczerrotation}, we have the following classification for surfaces of revolution in $\mathbb{H}^3(r)$.

\begin{definition}
Consider two subspaces $P^2,P^3\subset\mathbb{E}_1^{4}$ such that $P^2\subset P^3$ and $P^3\cap\mathbb{H}^{3}(r)\not=\emptyset$. Let $\gamma:I\to \bbHr= P^3\cap\mathbb{H}^{3}(r)$ be a regular curve that does not intersect $P^2$. The orbits of $\gamma$ under the action of $O(P^2)$ is called a \emph{surface of revolution} in $\mathbb{H}^{3}(r)$ generated by $\gamma$ and rotated around $P^2$. In addition,  the surface of revolution is said to be of
\begin{enumerate}
 \item \emph{elliptic type}, if $P^2$ is Lorentzian,
 \item \emph{hyperbolic type}, if $P^2$ is Riemannian, and
 \item \emph{parabolic type}, if $P^2$ is degenerate. 
\end{enumerate}
\end{definition}

\begin{remark}
In the hyperboloid model, the orbits of the action of a revolution of elliptic, hyperbolic, and parabolic types are ellipses, hyperbolas, and parabolas, respectively. In \cite{carmodajczerrotation}, surfaces of elliptic type are said to be spherical. However, the orbits are elliptic unless $P^2$ is orthogonal to the $x$-axis (i.e., the canonical timelike direction). Finally, note that from an intrinsic viewpoint, we can alternatively describe the orbits as circles, hypercycles, and horocycles.
\end{remark}

In the constructions below, we first fix a basis $\{\mathbf{e}_1,\mathbf{e}_2,\mathbf{e}_3,\mathbf{e}_4\}$ of $\mathbb{E}_1^4$ (not necessarily the canonical one). Later, we shall take $P^2=[\bfe_1,\bfe_2]$ and $P^3=[\bfe_1,\bfe_2,\bfe_3]$. Finally, to generate a surface of revolution, we take a curve $\gamma(t)=x_1(t)\bfe_1+x_2(t)\bfe_2+x_3(t)\bfe_3$ and then rotate it around $P^2$. Without loss of generality, the parametrizations of the three types of surfaces of revolution are the following:
\begin{enumerate}
\item Surfaces of elliptic type. Consider $P^2=[\bfe_{x},\bfe_{y}]$ and $P^3=[\bfe_{x},\bfe_{y},\bfe_{z}]$. The members $ \mathcal{E}_{\theta}$ of $O(P^2)$ take the form
\begin{equation*}
 \mathcal{E}_{\theta} =
 \left(
 \begin{array}{cccc}
 1\quad & 0 \,\,& 0 & 0 \\
 0 \quad & 1 \,\,& 0 & 0 \\
 0 \quad & 0 \,\,& \cos\theta & -\sin\theta \\
 0 \quad & 0 \,\,& \sin\theta & \cos\theta\\
 \end{array}
 \right).
\end{equation*}
Given a curve $\gamma(t)=(x(t),y(t),z(t),0)\in \bbHr=P^3\subset\mathbb{H}^3(r)$, we obtain a surface of revolution $S_{\mathcal{E}}$ parametrized by
\begin{equation}\label{eq::ParametrizEllipticSurfOfRevH3}
 S_{\mathcal{E}}(t,\theta) = S_{\mathcal{E}}(t,\theta)=\mathcal{E}_{\theta}(\gamma(t))=(x(t),y(t),z(t)\cos\theta,z(t)\sin\theta).
\end{equation}

\item Surfaces of hyperbolic type. Consider $P^2=[\mathbf{e}_y,\mathbf{e}_z]$ and $P^3=[\mathbf{e}_y,\mathbf{e}_{z},\mathbf{e}_{x}]$. The members $ \mathcal{H}_{\theta}$ of $O(P^2)$ take the form
\begin{equation*}
 \mathcal{H}_{\theta} =
 \left(
 \begin{array}{cccc}
 \cosh\theta \quad & 0 \quad & 0 \quad & \sinh\theta \\
 0 \quad & 1 \quad & 0 \quad & 0 \\
 0 \quad & 0 \quad & 1 \quad & 0 \\
 \sinh\theta \quad & 0 \quad & 0 \quad &\cosh\theta \\
 \end{array}
 \right).
\end{equation*}
Given a curve $\gamma(t)=(x(t),y(t),z(t),0)\in \bbHr=P^3\subset\mathbb{H}^3(r)$, we obtain a surface of revolution $S_{\mathcal{H}}$ parametrized by
\begin{equation}\label{eq::ParametrizHyperbolicSurfOfRevH3}
 S_{\mathcal{H}}(t,\theta) = \mathcal{H}_{\theta}(\gamma(t))=(x(t)\cosh\theta,y(t),z(t),x(t)\sinh\theta).
\end{equation}

\item Surfaces of parabolic type. Consider $P^2=[\frac{\mathbf{e}_{y}+\mathbf{e}_{x}}{\sqrt{2}},\mathbf{e}_z]$ and $P^3=[\frac{\mathbf{e}_{y}+\mathbf{e}_{x}}{\sqrt{2}},\mathbf{e}_z,\frac{\mathbf{e}_{y}-\mathbf{e}_{x}}{\sqrt{2}}]$. The members $ \mathcal{P}_{\theta}$ of $O(P^2)$ take the form
\begin{equation*}
 \mathcal{P}_{\theta} =
 \left(
 \begin{array}{cccc}
 1+\frac{\theta^2}{2} & -\frac{\theta^2}{2} \quad & 0 \quad & -\theta \\[4pt]
 \frac{\theta^2}{2} & 1-\frac{\theta^2}{2} \quad & 0 \quad & -\theta \\[4pt]
 0 & 0 \quad & 1 \quad & 0 \\
 -\theta & \theta \quad & 0 \quad & 1 \\
 \end{array}
 \right).
\end{equation*}
Now, given a curve $\gamma(t)=(x(t),y(t),z(t),0)\in \bbHr=P^3\subset\mathbb{H}^3(r)$, we obtain a surface of revolution $S_{\mathcal{P}}(t,\theta)=\mathcal{P}_{\theta}(\gamma(t))$ parametrized by
\begin{equation}\label{eq::ParametrizParabolicSurfOfRevH3}
 S_{\mathcal{P}}(t,\theta) = \left(x+\frac{\theta^2}{2}(x-y),y+\frac{\theta^2}{2}(x-y),z,-\theta (x- y)\right).
\end{equation}
\end{enumerate}

Let us calculate the mean curvature of the surfaces of revolution parametrized as above. 

\begin{proposition}\label{prop::MeanCurvOfSurfRevolutionInH3}
Let $S_{\gamma}$ be a surface of revolution in $\mathbb{H}^3(r)\subset\mathbb{E}_1^4$ with generating curve $\gamma(t)=(x(t),y(t),z(t),0)$. Let $H$ be the mean curvature of $S_{\gamma}$. We have,
\begin{enumerate}
 \item If $S_{\gamma}$ is of spherical type, then
 \begin{equation}\label{mean-e}
 H = \frac{z\left[x(\ddot{y}\dot{z}-\dot{y}\ddot{z})-y(\ddot{x}\dot{z}-\dot{x}\ddot{z})+z(\ddot{x}\dot{y}-\dot{x}\ddot{y})\right]+(x\dot{y}-\dot{x}y)\Vert\dot{\gamma}\Vert^2}{2rz\Vert\dot{\gamma}\Vert^{3}}.
 \end{equation}
 \item If $S_{\gamma}$ is of hyperbolic type, then
 \begin{equation}\label{mean-h}
 H = \frac{x\left[x(\ddot{y}\dot{z}-\dot{y}\ddot{z})-y(\ddot{x}\dot{z}-\dot{x}\ddot{z})+z(\ddot{x}\dot{y}-\dot{x}\ddot{y})\right]-(y\dot{z}-\dot{y}z)\Vert\dot{\gamma}\Vert^2}{2rx\Vert\dot{\gamma}\Vert^{3}}.
 \end{equation}
 \item If $S_{\gamma}$ is of parabolic type, then
 \begin{eqnarray}\label{mean-p}
 H & = & -\frac{(x-y)\left[x(\ddot{y}\dot{z}-\dot{y}\ddot{z})-y(\ddot{x}\dot{z}-\dot{x}\ddot{z})+z(\ddot{x}\dot{y}-\dot{x}\ddot{y})\right]}{2r(x-y)\Vert\dot{\gamma}\Vert^{3}} \nonumber\\
 &  & - \frac{\left[(x-y)\dot{z}-(\dot{x}-\dot{y})z\right]\Vert\dot{\gamma}\Vert^2}{2r(x-y)\Vert\dot{\gamma}\Vert^{3}}.
 \end{eqnarray}
\end{enumerate}
\end{proposition}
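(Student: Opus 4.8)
The plan is to compute $H$ for each $S_\gamma$ from its first and second fundamental forms, viewing $S_\gamma$ as a spacelike surface of the Riemannian space form $\mathbb{H}^3(r)$. The decisive simplification is that each one-parameter family $\mathcal{E}_\theta$, $\mathcal{H}_\theta$, $\mathcal{P}_\theta$ consists of isometries of $\mathbb{H}^3(r)$ that preserve $S_\gamma$ and send $S_\gamma(t,0)=\gamma(t)$ to $S_\gamma(t,\theta)$. Hence $H$ is constant along each orbit, and it is enough to evaluate it at $\theta=0$. Differentiating \eqref{eq::ParametrizEllipticSurfOfRevH3}, \eqref{eq::ParametrizHyperbolicSurfOfRevH3}, \eqref{eq::ParametrizParabolicSurfOfRevH3} and setting $\theta=0$, I find in all three cases $S_t|_{\theta=0}=\dot\gamma$, while $S_\theta|_{\theta=0}$ is a scalar multiple of $\bfe_w$, equal to $z\,\bfe_w$, $x\,\bfe_w$, and $-(x-y)\,\bfe_w$, respectively.

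Next I would fix the unit normal $\nu$ of $S_\gamma$ inside $\mathbb{H}^3(r)$. At $\theta=0$ it must be orthogonal to the position vector $\gamma$, to $S_t=\dot\gamma$, and to $S_\theta\parallel\bfe_w$; the last condition forces $\nu\in\mathbb{E}_1^3=[\bfe_x,\bfe_y,\bfe_z]$, so in every case $\nu$ is proportional to the Lorentzian cross product $\gamma\times_1\dot\gamma$ formed from the first three coordinates. Because $\gamma$ is timelike with $\langle\gamma,\gamma\rangle_1=-r^2$, $\dot\gamma$ is spacelike, and $\langle\gamma,\dot\gamma\rangle_1=0$, the Lorentzian Lagrange identity (whose sign is opposite to the Euclidean one) gives $\langle\gamma\times_1\dot\gamma,\gamma\times_1\dot\gamma\rangle_1=r^2\Vert\dot\gamma\Vert^2$, so that
\[
\nu=\frac{1}{r\Vert\dot\gamma\Vert}\,\bigl(\gamma\times_1\dot\gamma,\,0\bigr)
\]
is a unit spacelike field. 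The point that makes the second fundamental form computable from flat derivatives is that $\nu$ is orthogonal to the position vector: since the unit normal of $\mathbb{H}^3(r)$ in $\mathbb{E}_1^4$ at $X$ is $X/r$, the intrinsic $\mathbb{H}^3(r)$-covariant derivative agrees with the flat ambient derivative once paired with $\nu$, so the second fundamental form coefficients are simply $\langle\partial_i\partial_j S_\gamma,\nu\rangle_1$.

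The computation then assembles quickly. At $\theta=0$ the first fundamental form is diagonal, with $E=\langle\dot\gamma,\dot\gamma\rangle_1=\Vert\dot\gamma\Vert^2$, $F=0$ (as $S_\theta\parallel\bfe_w\perp\dot\gamma$), and $G=\langle S_\theta,S_\theta\rangle_1$ equal to $z^2$, $x^2$, and $(x-y)^2$. For the second fundamental form, $S_{tt}|_{\theta=0}=\ddot\gamma$ yields the common coefficient $e=\langle\ddot\gamma,\gamma\times_1\dot\gamma\rangle_1/(r\Vert\dot\gamma\Vert)$, and $\langle\ddot\gamma,\gamma\times_1\dot\gamma\rangle_1$ is exactly (up to sign) the determinant $x(\ddot y\dot z-\dot y\ddot z)-y(\ddot x\dot z-\dot x\ddot z)+z(\ddot x\dot y-\dot x\ddot y)$ of the statement; also $f=0$, since $S_{t\theta}|_{\theta=0}$ is again a multiple of $\bfe_w$. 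The only case-dependent term is $g=\langle S_{\theta\theta},\nu\rangle_1$, computed from $S_{\theta\theta}|_{\theta=0}$ equal to $-z\,\bfe_z$, $x\,\bfe_x$, and $(x-y)(\bfe_x+\bfe_y)$; pairing with $\nu$ produces, up to sign, the combinations $x\dot y-\dot x y$, $y\dot z-\dot y z$, and $(x-y)\dot z-(\dot x-\dot y)z$ (each divided by $r\Vert\dot\gamma\Vert$). Since $F=f=0$,
\[
H=\frac{1}{2}\,\frac{eG-2fF+gE}{EG-F^2}=\frac{1}{2}\,\frac{eG+gE}{EG},
\]
and substituting the quantities above gives \eqref{mean-e}, \eqref{mean-h}, and \eqref{mean-p}.

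Conceptually the argument is clean, so the effort is essentially Lorentzian bookkeeping, and that is where I expect the main difficulty. One must keep track of the causal characters to obtain the correct (positive) sign in the Lorentzian Lagrange identity, and hence the normalization $r\Vert\dot\gamma\Vert$, and must fix the orientation of $\nu$ once and for all, since it determines the global sign of $H$ and has to be chosen consistently across the three types. The algebra is heaviest in the parabolic case, where $S_{\theta\theta}|_{\theta=0}=(x-y)(\bfe_x+\bfe_y)$ leads to the combination $(x-y)\dot z-(\dot x-\dot y)z$; checking that it condenses into the compact form displayed in \eqref{mean-p} is the one genuinely nontrivial simplification.
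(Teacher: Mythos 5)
Your plan is correct in substance and relies on the same machinery as the paper's proof: the first and second fundamental forms of the parametrized surface, the observation that $h_{ij}=\langle \partial^2_{ij}S_\gamma,\nu\rangle_1$ because $\nu$ is orthogonal to the position vector, and formula \eqref{hh}. The genuine difference is executional: the paper grinds through the whole computation at general $\theta$ (full normal $\xi$, all $\theta$-dependent second derivatives), whereas your equivariance argument reduces everything to $\theta=0$, where $S_t=\dot\gamma$, $S_\theta\parallel\bfe_w$, and the normal is a multiple of $(\gamma\times_1\dot\gamma,0)$. That reduction is legitimate (the rotations are isometries preserving the surface and carrying the normal field along) and buys a substantially lighter computation, especially in the parabolic case; your values of $E,F,G$, of $e$, $f=F=0$, and of the case-dependent $g$ all check against the paper's $g_{ij}$, $h_{ij}$.

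The one concrete thing you must repair is the orientation bookkeeping, and it is more treacherous than you anticipate: a single normal $\nu=+(\gamma\times_1\dot\gamma,0)/(r\Vert\dot\gamma\Vert)$ ``chosen consistently across the three types'' cannot reproduce all three printed formulas. The signs in \eqref{mean-e}--\eqref{mean-p} come from the parametrization-induced normal $\xi=(S_t\times S_\theta)/\sqrt{\det g_{ij}}$, and since $S_\theta|_{\theta=0}$ is a \emph{positive} multiple of $\bfe_w$ (namely $z\,\bfe_w$ and $x\,\bfe_w$) in the elliptic and hyperbolic cases but a \emph{negative} multiple, $-(x-y)\,\bfe_w$, in the parabolic case, one gets $\xi|_{\theta=0}=-(\gamma\times_1\dot\gamma,0)/(r\Vert\dot\gamma\Vert)$ in the first two cases and $\xi|_{\theta=0}=+(\gamma\times_1\dot\gamma,0)/(r\Vert\dot\gamma\Vert)$ in the third. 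With your fixed $\nu$ you would land exactly on \eqref{mean-p} but obtain \eqref{mean-e} and \eqref{mean-h} with an overall minus sign. Since $H$ flips sign with the normal and the statement does not fix one, this is a convention mismatch rather than an error---and it is invisible in the only application, minimality---but to obtain the formulas as printed you must use the per-case normals $S_t\times S_\theta$ rather than a single orientation of $\gamma\times_1\dot\gamma$. Incidentally, your cross-product description exposes a typo in the paper's displayed parabolic $\xi$: at $\theta=0$ its $\bfe_y$-component should be $-(x\dot z-\dot x z)/(r\Vert\dot\gamma\Vert)$, as forced by orthogonality to the position vector; the paper's subsequent $h_{ij}$ and \eqref{mean-p} are consistent with the corrected normal.
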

\begin{proof}%[of Proposition \ref{prop::MeanCurvOfSurfRevolutionInH3}]
First, note we may employ the ternary product $\times_1$ of $\mathbb{E}_1^4$ to define a vector product $\times$ in $\mathbb{H}^3(r)$. Indeed, if $X,Y\in T_p\mathbb{H}^3(r)$, we have
\begin{equation*}
X\times Y = X \times_1 Y \times_1 \frac{p}{r} 
 = \frac{1}{r}\det\left(
 \begin{array}{cccc}
 x_1 & y_1 & z_1 & w_1 \\
 x_2 & y_2 & z_2 & w_2 \\
 x & y & z & w \\
 -\bfe_x & \bfe_y & \bfe_z & \bfe_w \\
 \end{array}
 \right),
\end{equation*}
where $p=(x,y,z,w)$, $X=(x_1,y_1,z_1,w_1)$, and $Y=(x_2,y_2,z_2,w_2)$. Here, the vectors of the orthonormal basis $\{\bfe_x, \bfe_y, \bfe_z, \bfe_w\}$ are placed in the last line to guarantee that $\bfe_x \times_1 \bfe_y \times_1 \bfe_z = \bfe_w$.

Consider a parametrization $\Phi=\Phi(u^1,u^2)$ of a surface of $\mathbb{H}^3(r)$. Recall that the coefficients $(g_{ij})$ of the first fundamental form are $g_{ij}=\langle\frac{\partial\Phi}{\partial u^i}, \frac{\partial\Phi}{\partial u^j}\rangle$. Fix the unit normal 
 $$
  \xi=\frac{1}{\sqrt{\det g_{ij}}}\frac{\partial \Phi}{\partial u^1}\times\frac{\partial \Phi}{\partial u^2}.
 $$
The coefficients $(h_{ij})$ of the second fundamental form are
 $$
  h_{ij}=\langle\nabla_{\frac{\partial \Phi}{\partial u^i}}\frac{\partial \Phi}{\partial u^j},\xi\rangle = \langle\frac{\partial^2 \Phi}{\partial u^i\partial u^j},\xi\rangle.
 $$
Then, the mean curvature $H$ of $\Phi(u^1,u^2)$ is given by 
\begin{equation}\label{hh}
H=\frac{g_{22}h_{11}-2g_{12}h_{12}+g_{11}h_{22}}{2(g_{11}g_{22}-g_{12}^2)}.
\end{equation}
We now compute \eqref{hh} in each of the three types of surfaces of revolution.

\begin{enumerate}
\item Let $S_{\gamma}$ be of revolution of spherical type. Without loss of generality, we may parametrize it as in \eqref{eq::ParametrizEllipticSurfOfRevH3}: $S_{\gamma}=S_{\mathcal{E}}$. The tangent plane is spanned by 
\begin{equation*}
\frac{\partial S_{\mathcal{E}}}{\partial t} = (\dot{x},\dot{y},\dot{z}\cos\theta,\dot{z}\sin\theta)
\quad \mbox{and} \quad \frac{\partial S_{\mathcal{E}}}{\partial \theta} = (0,0,-z\sin\theta,z\cos\theta).
\end{equation*}
Thus, the first fundamental form of $S_{\mathcal{E}}$ becomes
\begin{equation*}
\rmd s^2 = (-\dot{x}^2+\dot{y}^2+\dot{z}^2)\rmd t^2+z^2\rmd\theta^2 = \Vert\dot{\gamma}\Vert^2\rmd t^2+z^2\rmd\theta^2.
\end{equation*}

The unit normal $\xi$ of $S_{\mathcal{E}}$ is given by
\begin{equation*}
 \xi = \frac{1}{r\Vert\dot{\gamma}\Vert}\Big(y\dot{z}-\dot{y}z,x\dot{z}-\dot{x}z,(\dot{x}y-x\dot{y})\cos\theta,(\dot{x}y-x\dot{y})\sin\theta \Big).
\end{equation*}
Let us compute the second fundamental form's coefficients $h_{ij}$. The second derivatives of $S_{\mathcal{E}}$ are
\[
\frac{\partial^2 S_{\mathcal{E}}}{\partial t^2} = (\ddot{x},\ddot{y},\ddot{z}\cos\theta,\ddot{z}\sin\theta), \quad  
\frac{\partial^2 S_{\mathcal{E}}}{\partial t\partial\theta} = (0,0,-\dot{z}\sin\theta,\dot{z}\cos\theta),
\]
and
\[
 \frac{\partial^2 S_{\mathcal{E}}}{\partial \theta^2} = (0,0,-z\cos\theta,-z\sin\theta).
\]
% \begin{eqnarray*}
% \frac{\partial^2 S_{\mathcal{E}}}{\partial t^2} &=& (\ddot{x},\ddot{y},\ddot{z}\cos\theta,\ddot{z}\sin\theta),\\ 
% \frac{\partial^2 S_{\mathcal{E}}}{\partial t\partial\theta} &= &(0,0,-\dot{z}\sin\theta,\dot{z}\cos\theta),\\
% \frac{\partial^2 S_{\mathcal{E}}}{\partial \theta^2} &=& (0,0,-z\cos\theta,-z\sin\theta).
% \end{eqnarray*}
Thus, we have $h_{12}=0$ and 
\[
 h_{11} = \frac{1}{r\Vert\dot{\gamma}\Vert}\left[x(\ddot{y}\dot{z}-\dot{y}\ddot{z})-y(\ddot{x}\dot{z}-\dot{x}\ddot{z})+z(\ddot{x}\dot{y}-\dot{x}\ddot{y})\right],\quad
 h_{22} = \frac{z(x\dot{y}-\dot{x}y)}{r\Vert\dot{\gamma}\Vert}.
\]
% \begin{eqnarray*}
%  h_{11} & = & \frac{1}{r\Vert\dot{\gamma}\Vert}\left[x(\ddot{y}\dot{z}-\dot{y}\ddot{z})-y(\ddot{x}\dot{z}-\dot{x}\ddot{z})+z(\ddot{x}\dot{y}-\dot{x}\ddot{y})\right],\\
%  h_{22}&= &\frac{z(x\dot{y}-\dot{x}y)}{r\Vert\dot{\gamma}\Vert}.
% \end{eqnarray*}
Substitution of $h_{ij}$ and $g_{ij}$ in Eq. \eqref{hh} gives the desired expression \eqref{mean-e}. 

\item Let $S_{\gamma}$ be a surface of revolution of hyperbolic type. Without loss of generality, we may parametrize it as in \eqref{eq::ParametrizHyperbolicSurfOfRevH3}: $S_{\gamma}=S_{\mathcal{H}}$. The tangent plane is spanned by 
\begin{equation}
\frac{\partial S_{\mathcal{H}}}{\partial t} = (\dot{x}\cosh\theta,\dot{y},\dot{z},\dot{x}\sinh\theta)
\quad \mbox{and} \quad \frac{\partial S_{\mathcal{H}}}{\partial \theta} = (x\sinh\theta,0,0,x\cosh\theta).
\end{equation}
Thus, the first fundamental form of $S_{\mathcal{H}}$ becomes
\begin{equation}
\rmd s^2 = (-\dot{x}^2+\dot{y}^2+\dot{z}^2)\rmd t^2+z^2\rmd\theta^2 = \Vert\dot{\gamma}\Vert^2\rmd t^2+x^2\rmd\theta^2.
\end{equation}

The unit normal $\xi$ of $S_{\mathcal{H}}$ is given by
\begin{equation*}
 \xi = \frac{1}{r\Vert\dot{\gamma}\Vert}\Big((y\dot{z}-\dot{y}z)\cosh\theta,x\dot{z}-\dot{x}z,\dot{x}y-x\dot{y},(y\dot{z}-\dot{y}z)\sinh\theta\Big).
\end{equation*}
Let us compute the second fundamental form's coefficients $h_{ij}$. The second derivatives of $S_{\mathcal{H}}$ are
\[
\frac{\partial^2 S_{\mathcal{H}}}{\partial t^2}  = (\ddot{x}\cosh\theta,\ddot{y},\ddot{z},\ddot{x}\sinh\theta), \quad \frac{\partial^2 S_{\mathcal{H}}}{\partial t\partial\theta} = (\dot{x}\sinh\theta,0,0,\dot{x}\cosh\theta),
\]
and
\[
\frac{\partial^2 S_{\mathcal{H}}}{\partial \theta^2} = (x\cosh\theta,0,0,x\sinh\theta).
\]
% \begin{eqnarray*}
% \frac{\partial^2 S_{\mathcal{H}}}{\partial t^2}& = &(\ddot{x}\cosh\theta,\ddot{y},\ddot{z},\ddot{x}\sinh\theta),\\
% \frac{\partial^2 S_{\mathcal{H}}}{\partial t\partial\theta} &=& (\dot{x}\sinh\theta,0,0,\dot{x}\cosh\theta),\\
% \frac{\partial^2 S_{\mathcal{H}}}{\partial \theta^2} &=& (x\cosh\theta,0,0,x\sinh\theta).
% \end{eqnarray*}
Thus, we have $h_{12}=0$ and
\[
 h_{11} = \frac{1}{r\Vert\dot{\gamma}\Vert}\left[x(\ddot{y}\dot{z}-\dot{y}\ddot{z})-y(\ddot{x}\dot{z}-\dot{x}\ddot{z})+z(\ddot{x}\dot{y}-\dot{x}\ddot{y})\right], \quad
 h_{22} = -\frac{x(y\dot{z}-\dot{y}z)}{r\Vert\dot{\gamma}\Vert}.
\]
% \begin{eqnarray*}
%  h_{11} & = &\frac{1}{r\Vert\dot{\gamma}\Vert}\left[x(\ddot{y}\dot{z}-\dot{y}\ddot{z})-y(\ddot{x}\dot{z}-\dot{x}\ddot{z})+z(\ddot{x}\dot{y}-\dot{x}\ddot{y})\right]\\
%  h_{22} &=& -\frac{x(y\dot{z}-\dot{y}z)}{r\Vert\dot{\gamma}\Vert}.
% \end{eqnarray*}
Again, a substitution of $h_{ij}$ and $g_{ij}$ in Eq. \eqref{hh} gives \eqref{mean-h}. 

\item Let $S_{\gamma}$ be a surface of revolution of parabolic type. Without loss of generality, we may parametrize it as in \eqref{eq::ParametrizParabolicSurfOfRevH3}: $S_{\gamma}=S_{\mathcal{P}}$. The tangent plane is now spanned by 
\begin{equation*}
\frac{\partial S_{\mathcal{P}}}{\partial t} = \left((1+\frac{\theta^2}{2})\dot{x}-\frac{\theta^2}{2}\dot{y},\frac{\theta^2}{2}\dot{x}+(1-\frac{\theta^2}{2})\dot{y},\dot{z},\theta\dot{y}-\theta\dot{x}\right)
\end{equation*}
and
\begin{equation*}
 \frac{\partial S_{\mathcal{P}}}{\partial \theta} = \Big(\theta (x- {y}),\theta (x- {y}),0,y-x \Big).
\end{equation*}
Thus, the first fundamental form of $S_{\mathcal{P}}$ becomes
\begin{equation*}
\rmd s^2 = (-\dot{x}^2+\dot{y}^2+\dot{z}^2)\rmd t^2+(x-y)^2\rmd\theta^2 = \Vert\dot{\gamma}\Vert^2\rmd t^2+(x-y)^2\rmd\theta^2.
\end{equation*}

The unit normal $\xi$ of $S_{\mathcal{P}}$ is given by
\begin{eqnarray*}
 \xi & = & \left[ \frac{\theta^2}{2}(x\dot{z}-\dot{x}z)-(1+\frac{\theta^2}{2})(y\dot{z}-\dot{y}z)\right]\frac{\bfe_x}{r\Vert\dot{\gamma}\Vert} \nonumber \\
 &  & - \left[\frac{\theta^2}{2}(y\dot{z}-\dot{y}z)-(1-\frac{\theta^2}{2})(x\dot{z}-\dot{x}z) \right]\frac{\bfe_y}{r\Vert\dot{\gamma}\Vert} \nonumber \\
 &  & + \Big(x\dot{y}-\dot{x}y\Big)\frac{\bfe_z}{r\Vert\dot{\gamma}\Vert}- \theta\left[(x-y)\dot{z}-(\dot{x}-\dot{y})z\right]\frac{\bfe_w}{r\Vert\dot{\gamma}\Vert}. 
\end{eqnarray*}
Let us compute the second fundamental form's coefficients $h_{ij}$. The second derivatives of $S_{\mathcal{P}}$ are
\[
\frac{\partial^2 S_{\mathcal{P}}}{\partial t^2} = \left((1+\frac{\theta^2}{2})\ddot{x}-\frac{\theta^2}{2}\ddot{y},\frac{\theta^2}{2}\ddot{x}+(1-\frac{\theta^2}{2})\ddot{y},\ddot{z},\theta\ddot{y}-\theta\ddot{x}\right),
\]
\[
\frac{\partial^2 S_{\mathcal{P}}}{\partial t\partial\theta} = (\dot{x}-\dot{y})\Big(\theta ,\theta ,0,-1 \Big), \quad \mbox{and} \quad
\frac{\partial^2 S_{\mathcal{P}}}{\partial \theta^2} = \Big(x- {y},x- {y},0,0 \Big).
\]
% \begin{eqnarray*}
% \frac{\partial^2 S_{\mathcal{P}}}{\partial t^2} &= &\left((1+\frac{\theta^2}{2})\ddot{x}-\frac{\theta^2}{2}\ddot{y},\frac{\theta^2}{2}\ddot{x}+(1-\frac{\theta^2}{2})\ddot{y},\ddot{z},\theta\ddot{y}-\theta\ddot{x}\right),\\
% \frac{\partial^2 S_{\mathcal{P}}}{\partial t\partial\theta} &=& (\dot{x}-\dot{y})\Big(\theta ,\theta ,0,-1 \Big), \\
% \frac{\partial^2 S_{\mathcal{P}}}{\partial \theta^2}& =& \Big(x- {y},x- {y},0,0 \Big).
% \end{eqnarray*}
Thus, we have $h_{12}=0$ and
\begin{eqnarray*}
 h_{11} & = & -\frac{1}{r\Vert\dot{\gamma}\Vert}\left[x(\ddot{y}\dot{z}-\dot{y}\ddot{z})-y(\ddot{x}\dot{z}-\dot{x}\ddot{z})+z(\ddot{x}\dot{y}-\dot{x}\ddot{y})\right],\\
 h_{22} &=& -(x-y)\frac{(x-y)\dot{z}-(\dot{x}-\dot{y})z}{r\Vert\dot{\gamma}\Vert}.
\end{eqnarray*}
Substitution of $h_{ij}$ and $g_{ij}$ in Eq. \eqref{hh} gives \eqref{mean-p}. 
\end{enumerate}
\end{proof}

%%%%%%%%%%%%%%%%%%%%%%%%%%%%%%%%
\section{Minimal Surfaces of Revolution}
\label{sec4}
%%%%%%%%%%%%%%%%%%%%%%%%%%%%%%%%%%%%%%%%%%%%%%%%%%%%%

This section proves the first of our main results, Theorem \ref{tmain} below, concerning the characterization of minimal surfaces of revolution in hyperbolic space. To do that, we use Prop. \ref{prop::MeanCurvOfSurfRevolutionInH3} to express the mean curvature in terms of the curvature $\kappa$ of the generating curve, Prop. \ref{prop::MeanCurvSurfRevolutionUsingCurvatureGeneratingFunctionH3}. Then, we compare the resulting expression  with the curvatures of the extrinsic catenaries that appear in Theorem \ref{t4}. 

First, we need an expression for the curvature of a generic curve in $\bbHr$.

% To prove our result, Theorem \ref{tmain} in this section, we will express the mean curvature $H$ of the surfaces of revolution in $\mathbb{H}^3(r)$ obtained in Proposition \ref{prop::MeanCurvOfSurfRevolutionInH3} in terms of the curvature $\kappa$ of its generating curve. Once obtained these expressions, we will compare with the curvatures of the extrinsic catenaries that appear in Theorem \ref{t4}. 

% First, we need a suitable expression of the curvature $\kappa$ of a curve of $\mathbb{H}^2(r)$. 
\begin{lemma}\label{lemma::KappaInH2}
The curvature $\kappa$ of $\gamma(t)=(x(t),y(t),z(t))$ in $\bbHr$ is given by
\begin{equation}\label{eq::CurvxyzInH2}
 \kappa = -\frac{x(\ddot{y}\dot{z}-\dot{y}\ddot{z})-y(\ddot{x}\dot{z}-\dot{x}\ddot{z})+z(\ddot{x}\dot{y}-\dot{x}\ddot{y})}{r\Vert\dot{\gamma}\Vert^3}.
\end{equation}
\end{lemma}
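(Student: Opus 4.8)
The plan is to obtain $\kappa$ directly from the ambient geometry of $\mathbb{E}_1^3$, exploiting that $\bbHr$ is a hypersurface whose unit normal at a point $\gamma$ is the position vector itself: since $\langle\gamma,\gamma\rangle_1=-r^2$, the vector $N=\gamma/r$ is a timelike unit normal to $\bbHr$ along $\gamma$. First I would introduce the Lorentzian cross product $\times_1$ of $\mathbb{E}_1^3$, defined by $\langle u\times_1 v,w\rangle_1=\det(u,v,w)$ (the ordinary determinant with rows $u,v,w$), in the same spirit as the ternary product used for $\mathbb{E}_1^4$ in Prop.~\ref{prop::MeanCurvOfSurfRevolutionInH3}.

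Parametrizing $\gamma$ by arc length $s$ (prime $=\rmd/\rmd s$), so that $T=\gamma'$ is a spacelike unit tangent, I would set $\mathbf{n}=N\times_1 T=\frac1r\,\gamma\times_1\gamma'$. Using the Lagrange identity for $\times_1$ together with $\langle N,N\rangle_1=-1$, $\langle T,T\rangle_1=1$ and $\langle N,T\rangle_1=0$, one checks that $\mathbf{n}$ is a spacelike unit vector orthogonal to both $T$ and $N$; hence it is tangent to $\bbHr$ and normal to $\gamma$. The key step is then to decompose the acceleration $\gamma''$ in the orthonormal frame $\{T,\mathbf{n},N\}$: differentiating $\langle\gamma',\gamma'\rangle_1=1$ gives $\langle\gamma'',T\rangle_1=0$, so the projection of $\gamma''$ onto the tangent plane of $\bbHr$ is purely its $\mathbf{n}$-component. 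Therefore the signed geodesic curvature is $\kappa=\langle\gamma'',\mathbf{n}\rangle_1=\frac1r\langle\gamma'',\gamma\times_1\gamma'\rangle_1=\frac1r\det(\gamma,\gamma',\gamma'')$.

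It remains to pass to the general parameter $t$. Writing $\sigma=\Vert\dot\gamma\Vert$, one has $\gamma'=\dot\gamma/\sigma$ and $\gamma''=\ddot\gamma/\sigma^2-(\dot\sigma/\sigma^3)\dot\gamma$; since the last term is a multiple of the second row $\dot\gamma$, it drops out of the determinant, giving $\det(\gamma,\gamma',\gamma'')=\sigma^{-3}\det(\gamma,\dot\gamma,\ddot\gamma)$ and hence $\kappa=\det(\gamma,\dot\gamma,\ddot\gamma)/(r\Vert\dot\gamma\Vert^3)$. Expanding the determinant along its first row and observing that $\det(\gamma,\dot\gamma,\ddot\gamma)=-\big[x(\ddot y\dot z-\dot y\ddot z)-y(\ddot x\dot z-\dot x\ddot z)+z(\ddot x\dot y-\dot x\ddot y)\big]$ reproduces Eq.~\eqref{eq::CurvxyzInH2}.

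The one genuinely delicate point---and the part I would check most carefully---is the sign/orientation bookkeeping, because \emph{signed} curvature only makes sense after fixing an orientation (equivalently, a choice of $\mathbf{n}$), and the result must be consistent with the convention already used in Lemma~\ref{lemma::CurvatureSemiGeoCoord}. I would nail this down by testing both formulas on a constant-$u$ coordinate curve $\gamma(t)=\psi(u_0,t)$: Lemma~\ref{lemma::CurvatureSemiGeoCoord} gives $\kappa=-\frac1r\tanh\frac{u_0}{r}$, and verifying that the determinant formula returns the same value confirms that the orientation $\mathbf{n}=\frac1r\gamma\times_1\gamma'$ is the correct one. As an alternative route that sidesteps the ambient argument altogether, one could substitute $\gamma=\psi(u(t),v(t))$ from \eqref{eq35} directly into the claimed determinant and reduce it to Lemma~\ref{lemma::CurvatureSemiGeoCoord}; this is more mechanical but only requires differentiating the explicit parametrization twice.
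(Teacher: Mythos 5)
Your proposal is correct and takes essentially the same route as the paper: both identify $N=\gamma/r$ as the unit normal of $\mathbb{H}^2(r)\subset\mathbb{E}_1^3$ and compute the geodesic curvature as the mixed product $\kappa=\langle\gamma\times_1\dot{\gamma},\ddot{\gamma}\rangle_1/(r\Vert\dot{\gamma}\Vert^3)=\det(\gamma,\dot{\gamma},\ddot{\gamma})/(r\Vert\dot{\gamma}\Vert^3)$, then expand in coordinates. The only difference is one of detail: you derive this formula from the orthonormal frame $\{T,\mathbf{n},N\}$ at arc length (plus a sign check on the coordinate curves), whereas the paper asserts it directly before expanding, and your sign conventions and final expression agree with the paper's.
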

\begin{proof}
The unit normal of the hyperbolic plane $\bbHr$ seen as a surface of $\mathbb{E}_1^3$ is given by $N= \gamma/r$. Thus, it follows that we may express the curvature $\kappa$ of $\gamma(t)=(x(t),y(t),z(t))$ in $\bbHr$ as
\begin{equation}\label{eq::CurvatureInH2}
 \kappa = \frac{\langle N\times_1\dot{\gamma},\nabla_{\dot{\gamma}}\dot{\gamma}\rangle}{r\Vert\dot{\gamma}\Vert^3} = \frac{\langle\gamma\times_1\dot{\gamma},\ddot{\gamma}\rangle_1}{r\Vert\dot{\gamma}\Vert^3}.
\end{equation}
A straightforward computation shows that
\[
 \gamma\times_1\dot{\gamma} = (\dot{y}z-y\dot{z},\dot{x}z-x\dot{z},x\dot{y}-\dot{x}y).
\]
Therefore, 
\begin{eqnarray}
 \langle\gamma\times_1\dot{\gamma},\ddot{\gamma}\rangle_1 &=& -\ddot{x}(\dot{y}z-y\dot{z})+\ddot{y}(\dot{x}z-x\dot{z})+\ddot{z}(x\dot{y}-\dot{x}y) \nonumber\\
 & = & -x(\ddot{y}\dot{z}-\dot{y}\ddot{z})+y(\ddot{x}\dot{z}-\dot{x}\ddot{z})-z(\ddot{x}\dot{y}-\dot{x}\ddot{y}).\nonumber
\end{eqnarray}
Substitution of $\langle\gamma\times_1\dot{\gamma},\ddot{\gamma}\rangle_1$ in Eq. \eqref{eq::CurvatureInH2} proves \eqref{eq::CurvxyzInH2}.
\end{proof}
 
% We express the mean curvature of the three types of surfaces of revolution in terms of the curvature $\kappa$ of its generating curve. 

\begin{proposition}\label{prop::MeanCurvSurfRevolutionUsingCurvatureGeneratingFunctionH3}
Let $S_{\gamma}$ be a surface of revolution in $\mathbb{H}^3(r)\subset\mathbb{E}_1^4$ with generating curve $\gamma(t)=(x(t),y(t),z(t),0)\in\bbHr\subset\mathbb{H}^3(r)$. Then, $S_\gamma$ is minimal if, and only if, the curvature of its generating curve satisfies
\begin{enumerate}
 \item if $S_{\gamma}$ is of elliptic type, then
 \begin{equation}\label{rh1}
 \kappa= \frac{x\dot{y}-\dot{x}y}{rz\Vert\dot{\gamma}\Vert},
 \end{equation}
 \item if $S_{\gamma}$ is of hyperbolic type, then
 \begin{equation}\label{rh2}
 \kappa= - \frac{y\dot{z}-\dot{y}z}{rx\Vert\dot{\gamma}\Vert},
 \end{equation}
 \item if $S_{\gamma}$ is of parabolic type, then
 \begin{equation}\label{rh3}
 \kappa=- \frac{(x-y)\dot{z}-(\dot{x}-\dot{y})z}{2r(x-y)\Vert\dot{\gamma}\Vert}.
 \end{equation}
\end{enumerate}
\end{proposition}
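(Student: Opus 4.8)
The plan is to read off the result by combining the two facts established immediately before it. The crucial observation is that the mixed triple-product
\[
x(\ddot{y}\dot{z}-\dot{y}\ddot{z})-y(\ddot{x}\dot{z}-\dot{x}\ddot{z})+z(\ddot{x}\dot{y}-\dot{x}\ddot{y}),
\]
which occupies the leading position in the numerator of each mean-curvature formula \eqref{mean-e}, \eqref{mean-h}, and \eqref{mean-p}, is precisely $-r\Vert\dot{\gamma}\Vert^{3}\kappa$ by Lemma \ref{lemma::KappaInH2}. Hence every expression for $H$ in Proposition \ref{prop::MeanCurvOfSurfRevolutionInH3} becomes an affine function of the generating curve's intrinsic curvature $\kappa$, with the remaining term supplying the inhomogeneous part.

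Concretely, I would substitute this identity into \eqref{mean-e} and cancel the common coordinate factor, which is legitimate since the generating curve does not intersect the axis $P^2$ and so $z\neq 0$; this reduces the elliptic mean curvature to a linear relation of the form $H=-\tfrac12\kappa+\tfrac{x\dot{y}-\dot{x}y}{2rz\Vert\dot{\gamma}\Vert}$. Imposing minimality $H=0$ and solving for $\kappa$ produces \eqref{rh1}. Because the only operations used are division by the nonvanishing quantities $r$, $\Vert\dot{\gamma}\Vert$, and $z$, the chain of equalities is reversible, which yields the ``if and only if'' in the direction from the curvature relation back to minimality.

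Repeating the substitution in \eqref{mean-h} and \eqref{mean-p} gives \eqref{rh2} and \eqref{rh3} in the same fashion, now dividing by $x\neq 0$ and by $x-y\neq 0$ respectively (again because $\gamma$ avoids $P^2$). The only point that needs genuine care is the bookkeeping of signs: the coefficient $h_{11}$ entering the second fundamental form carries an extra minus sign in the parabolic case compared with the elliptic and hyperbolic cases, so the sign attached to the $\kappa$-term is not uniform across the three situations and must be propagated correctly when isolating $\kappa$. Beyond this sign tracking there is no real obstacle; all of the structural content has already been absorbed into Proposition \ref{prop::MeanCurvOfSurfRevolutionInH3} and Lemma \ref{lemma::KappaInH2}, so the proof reduces to a short and reversible algebraic manipulation in each of the three cases.
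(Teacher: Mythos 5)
Your strategy coincides with the paper's own proof, which is precisely the one-line instruction to substitute the curvature formula \eqref{eq::CurvxyzInH2} into the mean-curvature expressions of Proposition \ref{prop::MeanCurvOfSurfRevolutionInH3} and impose $H=0$. Your elliptic and hyperbolic cases are carried out correctly: writing $T=x(\ddot{y}\dot{z}-\dot{y}\ddot{z})-y(\ddot{x}\dot{z}-\dot{x}\ddot{z})+z(\ddot{x}\dot{y}-\dot{x}\ddot{y})=-r\Vert\dot{\gamma}\Vert^{3}\kappa$, Eqs. \eqref{mean-e} and \eqref{mean-h} become $H=-\tfrac{\kappa}{2}+\tfrac{x\dot{y}-\dot{x}y}{2rz\Vert\dot{\gamma}\Vert}$ and $H=-\tfrac{\kappa}{2}-\tfrac{y\dot{z}-\dot{y}z}{2rx\Vert\dot{\gamma}\Vert}$, and setting $H=0$ (the factor $2$ cancels) yields \eqref{rh1} and \eqref{rh2}.

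Your parabolic case, however, is wrong as stated. Performing exactly the substitution you describe in \eqref{mean-p} --- including the sign flip coming from $h_{11}$ that you correctly flag --- gives
\[
H=\frac{\kappa}{2}-\frac{(x-y)\dot{z}-(\dot{x}-\dot{y})z}{2r(x-y)\Vert\dot{\gamma}\Vert},
\qquad\text{so}\qquad
H=0 \iff \kappa=\frac{(x-y)\dot{z}-(\dot{x}-\dot{y})z}{r(x-y)\Vert\dot{\gamma}\Vert},
\]
which is \emph{not} \eqref{rh3}: it differs from the printed formula by a factor of $-\tfrac12$. No amount of sign bookkeeping can produce the coefficient $\tfrac{1}{2r}$ appearing in \eqref{rh3}, because the overall factor $2$ in the denominator of \eqref{mean-p} always cancels once $H$ is set to zero, exactly as in the other two cases. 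The formula displayed above is in fact the one the paper itself relies on later: in the proof of Theorem \ref{tmain}, the parabolic minimality condition is matched against $\tfrac{(x-y)\dot{z}-(\dot{x}-\dot{y})z}{r(x-y)\Vert\dot{\gamma}\Vert}$ (Eq. \eqref{eq::auxEqMeanCurvSurfRevolutionParabolicTypeUsingCurvGeneratingCurve}) and the parabolic curvature of Theorem \ref{t4}, both consistent with it and inconsistent with \eqref{rh3}. So \eqref{rh3} as printed is a misprint, and a correct execution of your (and the paper's) method must either derive the corrected formula or flag the discrepancy; asserting that the substitution ``gives \eqref{rh3} in the same fashion'' is the gap --- it claims a computation comes out to something it demonstrably does not.
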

\begin{proof}
It is enough to substitute the formula for $\kappa$ obtained in \eqref{eq::CurvxyzInH2} in each of the expressions for the mean curvature of surfaces of revolution given in Proposition \ref{prop::MeanCurvOfSurfRevolutionInH3}. 
\end{proof}

\begin{theorem}\label{tmain}
The generating curves of minimal surfaces of revolution in $\mathbb{H}^3(r)$ of elliptic, hyperbolic, and parabolic types are extrinsic catenaries of spherical, hyperbolic, and parabolic types, respectively.
\end{theorem}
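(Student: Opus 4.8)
The plan is to recognize that both characterizations in play are curvature prescriptions, merely expressed in different coordinates: Proposition~\ref{prop::MeanCurvSurfRevolutionUsingCurvatureGeneratingFunctionH3} writes the minimality condition in terms of the ambient coordinates $(x,y,z)$ of the generating curve, whereas Theorem~\ref{t4} writes the extrinsic-catenary condition in the semi-geodesic coordinates $(u,v)$ of \eqref{eq35}. So I would fix, in each of the three cases, the generating curve $\gamma(t)=(x(t),y(t),z(t),0)$ inside $\mathbb{H}^2(r)=P^3\cap\mathbb{H}^3(r)$, which is common to all three constructions since $P^3=[\bfe_x,\bfe_y,\bfe_z]$ throughout, and write it in semi-geodesic coordinates via $\gamma=\psi(u,v)$, i.e.
\[
 x=r\cosh\tfrac{u}{r}\cosh\tfrac{v}{r},\quad y=r\cosh\tfrac{u}{r}\sinh\tfrac{v}{r},\quad z=r\sinh\tfrac{u}{r}.
\]
The crucial structural remark is that the rotation axis $P^2$ of each surface type is exactly the reference plane defining the extrinsic catenary of the matching type (Lorentzian $\leftrightarrow$ elliptic/spherical, Riemannian $\leftrightarrow$ hyperbolic, degenerate $\leftrightarrow$ parabolic), so one may reasonably expect the two prescriptions to coincide exactly rather than up to a geometric correction.

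Then I would substitute this parametrization into the minimality conditions \eqref{rh1}--\eqref{rh3} and simplify. The heart of the matter is the evaluation of the three ``Wronskian'' combinations appearing there, all of which collapse thanks to $\cosh^2-\sinh^2=1$. Concretely, for the elliptic case a direct differentiation gives
\[
 x\dot{y}-\dot{x}y=r\,\dot{v}\cosh^2\tfrac{u}{r},\qquad z=r\sinh\tfrac{u}{r},
\]
so that \eqref{rh1} becomes $\kappa=\dot{v}\cosh^2\tfrac{u}{r}/(r\sinh\tfrac{u}{r}\Vert\dot{\gamma}\Vert)$, which is precisely the elliptic case of Theorem~\ref{t4}. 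The hyperbolic case runs identically: one finds $y\dot{z}-\dot{y}z=r(\dot{u}\sinh\tfrac{v}{r}-\dot{v}\cosh\tfrac{u}{r}\cosh\tfrac{v}{r}\sinh\tfrac{u}{r})$ together with $x=r\cosh\tfrac{u}{r}\cosh\tfrac{v}{r}$, and \eqref{rh2} reduces to the hyperbolic line of Theorem~\ref{t4}. The parabolic case uses $x-y=r\cosh\tfrac{u}{r}\,\rme^{-v/r}$ and $(x-y)\dot{z}-(\dot{x}-\dot{y})z=r\,\rme^{-v/r}(\dot{u}+\dot{v}\cosh\tfrac{u}{r}\sinh\tfrac{u}{r})$, whereupon the $\rme^{-v/r}$ factors cancel to leave the parabolic line of Theorem~\ref{t4}. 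I would also note that the norm factor needs no separate treatment: $-\dot{x}^2+\dot{y}^2+\dot{z}^2=\dot{u}^2+\cosh^2\tfrac{u}{r}\,\dot{v}^2=\Vert\dot{\gamma}\Vert^2$, so the denominators match automatically and only the numerators need be compared.

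The main obstacle I anticipate is not the algebra of these cancellations but the bookkeeping of signs and normalizing constants, and this is most delicate in the parabolic case. Two curvature conventions must be reconciled: the signed curvature computed in semi-geodesic coordinates (Lemma~\ref{lemma::CurvatureSemiGeoCoord}) underlying Theorem~\ref{t4}, and the ambient formula \eqref{eq::CurvxyzInH2} of Lemma~\ref{lemma::KappaInH2}, which fixes the normal $N=\gamma/r$ and a definite orientation of the generating curve. I would verify at the outset that these conventions assign the same sign to $\kappa$ (the clean elliptic and hyperbolic matches already force the orientations to be consistent), and then track the overall factor through \eqref{rh3} with particular care, since here the $\rme^{-v/r}$ weight and the constant in front must be handled precisely—a stray factor of $2$ or sign would destroy the exact coincidence with the parabolic prescription of Theorem~\ref{t4} even though the structural form already agrees.
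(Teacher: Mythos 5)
Your proposal follows essentially the same route as the paper's proof: write the generating curve in the semi-geodesic coordinates \eqref{eq35}, substitute into the minimality conditions of Proposition~\ref{prop::MeanCurvSurfRevolutionUsingCurvatureGeneratingFunctionH3}, and match the result against Theorem~\ref{t4}; your three Wronskian identities are exactly Eqs.~\eqref{eq::auxEqMeanCurvSurfRevolutionEllipticTypeUsingCurvGeneratingCurve}--\eqref{eq::auxEqMeanCurvSurfRevolutionParabolicTypeUsingCurvGeneratingCurve} of the paper. The sign and factor-of-two issue you rightly flag in the parabolic case is in fact a typo in \eqref{rh3}: setting $H=0$ in \eqref{mean-p} and using Lemma~\ref{lemma::KappaInH2} gives $\kappa = \frac{(x-y)\dot{z}-(\dot{x}-\dot{y})z}{r(x-y)\Vert\dot{\gamma}\Vert}$ (no minus sign, no factor $2$), which is the form the paper's own proof implicitly uses and which then coincides exactly with the parabolic line of Theorem~\ref{t4}.
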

\begin{proof}
Let $\gamma(t)=\psi(u(t),v(t))$ be the generating curve of a surface of revolution. We calculate the right-hand sides of Eqs. \eqref{rh1}, \eqref{rh2}, and \eqref{rh3}. Using the parametrization $\psi$ given in \eqref{eq35}, we can write 
$$
 \gamma(t)=r\left(\cosh\frac{u(t)}{r}\cosh\frac{v(t)}{r},\cosh\frac{u(t)}{r}\sinh\frac{v(t)}{r},\sinh\frac{u(t)}{r}\right).
$$
Then
\begin{eqnarray}
 \frac{x\dot{y}-\dot{x}y}{rz\Vert\dot{\gamma}\Vert} &=& \frac{\dot{v}\cosh^2\frac{u}{r}}{r\sinh\frac{u}{r}\Vert\dot{\gamma}\Vert},\label{eq::auxEqMeanCurvSurfRevolutionEllipticTypeUsingCurvGeneratingCurve}\\
 \frac{y\dot{z}-\dot{y}z}{rx\Vert\dot{\gamma}\Vert} &=& \frac{\dot{u}\sinh\frac{v}{r}-\dot{v}\cosh\frac{v}{r}\cosh\frac{u}{u}\sinh\frac{u}{r}}{r\cosh\frac{v}{r}\cosh\frac{u}{r}\Vert\dot{\gamma}\Vert},\label{eq::auxEqMeanCurvSurfRevolutionHyperbolicTypeUsingCurvGeneratingCurve}\\
 \frac{(x-y)\dot{z}-(\dot{x}-\dot{y})z}{r(x-y)\Vert\dot{\gamma}\Vert}& = &\frac{\dot{u}+ \dot{v}\cosh\frac{u}{r}\sinh\frac{u}{r}}{r\Vert\dot{\gamma}\Vert\cosh\frac{u}{r}}.\label{eq::auxEqMeanCurvSurfRevolutionParabolicTypeUsingCurvGeneratingCurve}
\end{eqnarray}
Finally, we conclude the proof using the expressions for the curvature $\kappa$ of an extrinsic catenary given in Theorem \ref{t4}. 
\end{proof}

%%%%%%%%%%%%%%%%
\section{Hyperbolic horocatenary}
\label{sect::Horocatenary}
%%%%%%%%%%%%%%%%

In this section, we consider a variation of the hanging chain problem introduced by L\'opez \cite{lopezcatenaryform} where one measures the distance to a reference geodesic by using the so-called horocycle distance \cite{Izumiya2009Horosphericalgeometry}. In other words, we shall employ $d_h(\ell,p)$ defined as the distance measured along the horocycle passing through $p$ and orthogonal to the geodesic $\ell$. The solutions to this problem will be called \emph{hyperbolic horocatenary} \cite{lopezcatenaryform}.
Using horocatenaries will allow us to provide an intrinsic characterization of the extrinsic catenaries of the elliptic type, Theorem \ref{thr::EllipticCatenaryAreHorocatenary}.

As done for hyperbolic catenaries, we first introduce a coordinate system adapted to the problem. In the hyperboloid model, horocycles correspond to the curves given as the intersection of $\bbHr$ with lightlike planes of $\mathbb{E}_1^3$ not passing through the origin \cite{spivak1975comprehensive}. (Thus, horocycles are implicitly written as Euclidean parabolas.) Alternatively, we may describe horocycles as the orbits of the one-parameter subgroup of parabolic rotations \cite{carmodajczerrotation}, i.e., rotations induced in $\bbHr$ by the orthogonal transformations of $\mathbb{E}_1^3$ that leave a lightlike plane $P^2$ pointwise fixed. As a concrete example, consider the plane $P^2=\mathrm{span}\{\mathbf{e}_x+\mathbf{e}_y,\mathbf{e}_z\}$, where $\{\mathbf{e}_x,\mathbf{e}_y,\mathbf{e}_z\}$ is the canonical basis given by the unit velocity vectors associated with the Cartesian coordinates $(x,y,z)$. The orthogonal transformations of $\mathbb{E}_1^3$ that leaves $P^2$ pointwise fixed correspond to a lightlike rotation $L_{\theta}$ with axis $(1,1,0)$ and whose matrix is given by \cite{Nesovic2016lightlikerotation}
\begin{equation}
 L_{\theta} = \left(
 \begin{array}{ccc}
 1+\frac{\theta^2}{2} & -\frac{\theta^2}{2} & \theta \\[5pt]
 \frac{\theta^2}{2} & 1-\frac{\theta^2}{2} & \theta \\[5pt]
 \theta & -\theta & 1 
 \end{array}
 \right).
\end{equation}

Now, let $\ell(v)$ be the geodesic 
\begin{equation}
 v \mapsto \ell(v) = r (\cosh\frac{v}{r},\sinh\frac{v}{r},0). 
\end{equation}
The horocycles orthogonal to $\ell$ are given by $u\mapsto L_{u/r}(\ell(v))$. We may then parametrize $\bbHr$ as $\phi(u,v) = L_{u/r}(\ell(v))$, which gives 
\begin{equation}
 \phi= r\left(\begin{array}{l}
 (1+\frac{u^2}{2r^2})\cosh\frac{v}{r}-\frac{u^2}{2r^2}\sinh\frac{v}{r}\\
 \frac{u^2}{2r^2}\cosh\frac{v}{r}+(1-\frac{u^2}{2r^2})\sinh\frac{v}{r}\\
 \frac{u}{r}(\cosh\frac{v}{r}-\sinh\frac{v}{r})
 \end{array}\right).
\end{equation}
Alternatively, using the identity $\cosh\frac{v}{r}-\sinh\frac{v}{r}=\rme^{-\frac{v}{r}}$, we may rewrite the parametrization as
\begin{equation}\label{Def::HoroGeodesicParametrization}
 \phi(u,v) = r\left(\cosh\frac{v}{r}+\frac{u^2}{2r^2}\rme^{-\frac{v}{r}},\sinh\frac{v}{r}+\frac{u^2}{2r^2}\rme^{-\frac{v}{r}},\frac{u}{r}\,\rme^{-\frac{v}{r}}\right).
\end{equation}

The tangent vectors of the parametrization $\phi$ are
\begin{equation}
 \phi_u = \rme^{-\frac{v}{r}}\left(\frac{u}{r},\frac{u}{r},1\right), \quad 
 \phi_v = \left(\sinh\frac{v}{r}-\frac{u^2}{2r^2}\rme^{-\frac{v}{r}},\cosh\frac{v}{r}-\frac{u^2}{2r^2}\rme^{-\frac{v}{r}},-\frac{u}{r}\,\rme^{-\frac{v}{r}}\right).
\end{equation}
Then, the coefficients $g_{ij}$ of the metric are
\begin{equation*}
 g_{11} = \rme^{-\frac{2v}{r}},
\end{equation*}
\begin{eqnarray*}
 g_{12} & = & \rme^{-\frac{v}{r}}\left(-\frac{u}{r}\sinh\frac{v}{r}+\frac{u^3}{2r^3}\rme^{-\frac{v}{r}}+\frac{u}{r}\cosh\frac{v}{r}-\frac{u^3}{2r^3}\rme^{-\frac{v}{r}}-\frac{u}{r}\rme^{-\frac{v}{r}}\right)= 0,
 % & = & \rme^{-\frac{v}{r}}\left(\frac{u}{r}(\cosh\frac{v}{r}-\sinh\frac{v}{r})-\frac{u}{r}\rme^{-\frac{v}{r}}\right) = 0,
\end{eqnarray*}
and
\begin{eqnarray*}
 g_{22} &=& -\sinh^2\frac{v}{r}+\frac{u^2}{r^2}\rme^{-\frac{v}{r}}\sinh\frac{v}{r}-\frac{u^4}{4r^4}\rme^{-\frac{2v}{r}}+\cosh^2\frac{v}{r}-\frac{u^2}{r^2}\rme^{-\frac{v}{r}}\cosh\frac{v}{r} \nonumber\\
 &+& \frac{u^4}{4r^4}\rme^{-\frac{2v}{r}}+\frac{u^2}{r^2}\,\rme^{-\frac{2v}{r}} = 1.
 % & = & 1-\frac{u^2}{r^2}\rme^{-\frac{2v}{r}}+\frac{u^2}{r^2}\,\rme^{-\frac{2v}{r}} =1.
\end{eqnarray*}
In short, the metric of $\bbHr$ in the coordinates system $\phi(u,v)$ is
\begin{equation}
 g = \rme^{-\frac{2v}{r}}\,\rmd u^2+\rmd v^2.
\end{equation}
This coordinate system is similar to the semi-geodesic coordinate system. From now on, we shall refer to $\phi$ as the \emph{horo-geodesic parametrization}. Note that the coordinate curves $v\mapsto \phi(u,v)$ are geodesics: $\phi_{vv}=\frac{1}{r}\phi\Rightarrow \nabla_{\phi_v}\phi_v=0$. On the other hand, the coordinate curves $u\mapsto \phi(u,v)$ are horocycles.

By construction, the coordinate horocycles are orthogonal to the geodesic $\ell(v)=\phi(0,v)$. Thus, the horocycle distance can be computed as
\begin{equation}
 d_h(\phi(u,v),\ell) = \int_0^u \rme^{-\frac{v}{r}}\,\rmd t = u\,\rme^{-\frac{v}{r}}.
\end{equation}
Note that $d_h$ is nothing but the height of $\phi$ with respect to the $xy$-plane. Therefore, we can provide an intrinsic characterization of extrinsic catenary of spherical type
\begin{theorem}\label{thr::EllipticCatenaryAreHorocatenary}
Every hyperbolic horocatenary in $\mathbb{H}^2(r)$ is an extrinsic catenary of the elliptic type. Consequently, the generating curves of minimal surfaces of revolution in $\mathbb{H}^3(r)$ of the elliptic type are horocatenaries.
%\textcolor{red}{Are there similar intrinsic characterizations for extrinsic catenaries of parabolic and hyperbolic type?}
\end{theorem}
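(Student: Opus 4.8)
The plan is to show that, after placing the reference objects in standard position, the horocatenary problem and the elliptic extrinsic catenary problem are literally \emph{the same} variational problem, because both weight functions coincide (up to a positive constant) with the height coordinate $z$ on $\mathbb{H}^2(r)$. Since $\mathbb{H}^2(r)$ is homogeneous, I would first normalize so that the reference geodesic for the horocatenary is $\ell(v)=r(\cosh\frac{v}{r},\sinh\frac{v}{r},0)$ and the Lorentzian plane for the elliptic problem is $P^2=[\mathbf{e}_x,\mathbf{e}_y]$; these are compatible, in the sense that $\ell=P^2\cap\mathbb{H}^2(r)$.

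Next I would identify the two weights as functions on $\mathbb{H}^2(r)$. For a point $p=(x,y,z)\in\mathbb{H}^2(r)$, the extrinsic distance to $P^2=[\mathbf{e}_x,\mathbf{e}_y]$ is $|\langle p,\mathbf{e}_z\rangle_1|=|z|$, i.e.\ the height $z$; in semi-geodesic coordinates this is $r\sinh\frac{u}{r}$, the factor $r$ being the harmless constant dropped in \eqref{fu1}. On the other side, the computation preceding the theorem gives $d_h(\phi(u,v),\ell)=u\,\rme^{-\frac{v}{r}}$, and reading off the third component of the horo-geodesic parametrization \eqref{Def::HoroGeodesicParametrization} shows that this is exactly the $z$-coordinate of $\phi(u,v)$. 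Hence both weights equal the single function $p\mapsto z(p)$ on $\mathbb{H}^2(r)$, up to a positive constant.

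With this identification the first assertion is immediate: the horocatenary functional $\int_a^b d_h(\gamma,\ell)\Vert\dot\gamma\Vert\,\rmd t$ and the elliptic functional $\mathcal{W}_{\mathcal{E}}$ share the same integrand up to a constant multiple, so they have the same critical points, and every horocatenary is an extrinsic catenary of the elliptic type. A coordinate-free way to phrase this step is to invoke Theorem \ref{t1}: the Euler--Lagrange condition $\kappa=-\langle\mathbf{n},\nabla f\rangle/f$ depends only on the intrinsic data $(\kappa,\mathbf{n},\nabla f)$, so equal weight functions $f$ force identical critical curves, whether we describe them in semi-geodesic or in horo-geodesic coordinates. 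The second assertion then follows by combining this equivalence with the Main Theorem (Theorem \ref{tmain}): the generating curve of a minimal surface of revolution of elliptic type is an elliptic extrinsic catenary, hence a horocatenary.

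The only genuine content is the geometric identification $d_h=z=\mathrm{dist}(\cdot,P^2)$, and this is essentially already discharged in the setup, where $d_h$ is observed to be the height of $\phi$ above the $xy$-plane. The single point that needs a word of care is that Theorem \ref{t1} and Lemma \ref{lemma::GeneralEL-EqsScalarVelocity} were derived in semi-geodesic coordinates, whereas $d_h$ arises naturally in the horo-geodesic coordinates of \eqref{Def::HoroGeodesicParametrization}; but since the conclusion of Theorem \ref{t1} is intrinsic, it transfers verbatim, so I do not expect any real obstacle.
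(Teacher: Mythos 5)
Your proposal is correct and takes essentially the same approach as the paper: the paper's proof consists precisely of the observation, made immediately before the theorem, that the horocycle distance $d_h(\phi(u,v),\ell)=u\,\rme^{-\frac{v}{r}}$ is the height $z$ of the point, i.e.\ (up to a positive constant) the extrinsic distance to the Lorentzian plane $P^2=[\mathbf{e}_x,\mathbf{e}_y]$, so that the horocatenary functional and $\mathcal{W}_{\mathcal{E}}$ in \eqref{fu1} coincide and share the same critical points, the second assertion then following from Theorem \ref{tmain}. Your extra remark that the identification transfers between semi-geodesic and horo-geodesic coordinates because the characterization in Theorem \ref{t1} is intrinsic is a reasonable way of making explicit a point the paper leaves implicit, but it is the same argument.
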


\begin{remark}
The horo-geodesic coordinates are analogous to the semi-geodesic coordinates after we exchange $u$ and $v$. Thus, the Christoffel symbols are
\[
\Gamma_{11}^1 = \frac{F_u}{F}=0,\,\Gamma_{11}^2 = -FF_v=\frac{1}{r}\rme^{-\frac{2v}{r}},\,\Gamma_{12}^1=\frac{F_v}{F}=-\frac{1}{r},\,\Gamma_{12}^2=0,\,\Gamma_{22}^i=0,
\]
where $F=\rme^{-\frac{v}{r}}$. The curvature of $\gamma(t)=\phi(u(t),v(t))$ in $\mathbb{H}^2(r)$ is given by
\begin{equation}
 \kappa =\frac{\ddot{u}\dot{v}-\dot{u}(\ddot{v}+\frac{\dot{u}^2}{r}\rme^{-\frac{2v}{r}}+\frac{2\dot{v}^2}{r})}{\rme^{\frac{v}{r}}(\rme^{-\frac{2v}{r}}\dot{u}^2+\dot{v}^2)^{\frac{3}{2}}}.
\end{equation}
\end{remark}

\section{Concluding remarks}

We introduced the concept of extrinsic catenaries in the hyperbolic plane, providing novel insights into the variational formulation of curves in non-Euclidean ambient manifolds. Utilizing the hyperboloid model, we defined extrinsic catenaries as critical points of the gravitational potential functional calculated from the extrinsic distance to a fixed reference plane in the ambient Lorentzian space. We delved into the characterization of extrinsic catenaries in terms of their curvature and as solutions to a prescribed curvature problem involving specific vector fields, and we showed that the generating curve of any minimal surface of revolution in the hyperbolic space is an extrinsic catenary.

We note that catenaries of the elliptic type obey a conservation law. Indeed, the Lagrangian $L$ associated with catenaries of the elliptic type, see Eq. \eqref{fu1}, does not depend on the $v$-coordinate, which implies $\partial L/\partial\dot{v}$ is a first integral\footnote{A non-constant function is a first integral if it is constant along the solution curves of the problem $\min\int L(u,v,\dot{u},\dot{v})\rmd t$.}. Geometrically, this first integral is associated with a Clairaut-like relation, thus providing information about the angle $\vartheta$ between an extrinsic catenary of the elliptic type and the $v$-coordinate curves of the parametrization \eqref{eq35}: following Theorem 4.1 of Ref. \cite{dSL23}, we can prove that
\begin{equation}
    \frac{1}{2}\sinh\frac{2u}{r}\, \cos\vartheta = \mbox{constant}.
\end{equation}
In this context, the $v$-coordinates curves play the role of parallels if we see the hyperbolic plane as an invariant surface generated by rotations of the hyperbolic type. (We obtain \eqref{eq35} by applying $\mathcal{H}_{u/r}$, Eq. \eqref{eq::ParametrizHyperbolicSurfOfRevH3}, to the geodesic $\ell(v)=r(\cosh(v/r),\sinh(v/r),0)$ followed by the exchange $(x,y,z,w)\mapsto (x,z,w,y)$.)

Extrinsic catenaries of the hyperbolic and parabolic types have no obvious first integrals. Thus, we may ask whether similar conservation also holds for them or whether the absence of circular coordinates in the Lagrangian is just an artifact of a bad choice of coordinates for the hyperbolic plane.

In many aspects, horocycles behave as extrinsically flat curves in hyperbolic geometry, and replacing geodesics with them often leads to problems with good properties. In this work, we established that extrinsic catenaries of the elliptic type could be intrinsically characterized by replacing the extrinsic distance with the intrinsic length of horocycles orthogonal to a reference geodesic, in analogy with (intrinsic) catenaries \cite{lopezcatenaryform,dSL23}. The notion of extrinsic catenaries only makes sense when working with the hyperboloid model. Thus, we may also ask whether providing intrinsic characterizations for extrinsic catenaries of the hyperbolic and parabolic types is possible. Consequently, we ask whether it is possible to characterize the generating curve of any hyperbolic minimal surface of revolution without resorting to a specific model for $\mathbb{H}^3$. A similar question can also be posed concerning minimal surfaces of revolution in $\mathbb{S}^3$.

\bmhead{Acknowledgments}
% Luiz da Silva acknowledges the support provided by the Mor\'a Miriam Rozen Gerber fellowship for Brazilian postdocs and also the Faculty of Physics Postdoctoral Excellence Fellowship. Rafael L\'opez is a member of the Institute of Mathematics of the University of Granada. This work has been partially supported by Project PID2020-117868GB-I00, supported by MCIN/AEI.
Luiz da Silva acknowledges the support provided by the Mor\'a Miriam Rozen Gerber fellowship for Brazilian postdocs and the Faculty of Physics Postdoctoral Excellence Fellowship. {Rafael L\'opez  is a member of the IMAG and of the Research Group ``Problemas variacionales en geometr\'{\i}a'',  Junta de Andaluc\'{\i}a (FQM 325). This research has been partially supported by MINECO/MICINN/FEDER grant no. PID2020-117868GB-I00,  and by the ``Mar\'{\i}a de Maeztu'' Excellence Unit IMAG, reference CEX2020-001105-M, funded by MCINN/AEI/10.13039/501100011033/ CEX2020-001105-M.}

% % BibTeX users please use one of

% %\bibliographystyle{spbasic}      % basic style, author-year citations

% %\bibstyle{spmpsci}      % mathematics and physical sciences
% %\bibliographystyle{spphys}       % APS-like style for physics
% \bibliography{biblio-catenary}   % name your BibTeX data base

%% BioMed_Central_Bib_Style_v1.01

\end{document}